\documentclass{amsart}
\usepackage[all]{xy}
\xyoption{poly}
\xyoption{arc}
\xyoption{curve}

\CompileMatrices

\setlength{\textwidth}{6.50 in}

\setlength{\evensidemargin}{0.0 in}

\setlength{\oddsidemargin}{0.0 in}

\begin{document}

\newtheorem{theorem}{Theorem}{}
\newtheorem{lemma}[theorem]{Lemma}{}
\newtheorem{corollary}[theorem]{Corollary}{}
\newtheorem{conjecture}[theorem]{Conjecture}{}
\newtheorem{proposition}[theorem]{Proposition}{}
\newtheorem{axiom}{Axiom}{}
\newtheorem{remark}{Remark}{}
\newtheorem{example}{Example}{}
\newtheorem{exercise}{Exercise}{}
\newtheorem{definition}{Definition}{}
\newcommand{\nc}{\newcommand}

\nc{\gr}{\bullet}
\nc{\pr}{\noindent{\em Proof. }} \nc{\g}{\mathfrak g}
\nc{\n}{\mathfrak n} \nc{\opn}{\overline{\n}}\nc{\h}{\mathfrak h}
\renewcommand{\b}{\mathfrak b}
\nc{\Ug}{U(\g)} \nc{\Uh}{U(\h)} \nc{\Un}{U(\n)}
\nc{\Uopn}{U(\opn)}\nc{\Ub}{U(\b)} \nc{\p}{\mathfrak p}
\renewcommand{\l}{\mathfrak l}
\nc{\z}{\mathfrak z} \renewcommand{\h}{\mathfrak h}
\nc{\m}{\mathfrak m}
\renewcommand{\t}{\mathfrak t}
\renewcommand{\k}{\mathfrak k}
\nc{\opk}{\overline{\k}}
\nc{\opb}{\overline{\b}}

\title{The structure of the nilpotent cone, \\ the Kazhdan--Lusztig map and algebraic
group analogues of the Slodowy slices}

\author{A. Sevostyanov}
\address{ Institute of Pure and Applied Mathematics,
University of Aberdeen \\ Aberdeen AB24 3UE, United Kingdom \\ email: a.sevastyanov@abdn.ac.uk}

\begin{abstract}
We define algebraic group analogues of the Slodowy transversal
slices to adjoint orbits in a complex semisimple Lie algebra $\g$.
The new slices are transversal to the conjugacy classes in an
algebraic group $G$ with Lie algebra $\g$. These slices are associated
to (the conjugacy classes of) elements $s$ of the Weyl group $W$ of $\g$. For such
slices we prove an analogue of the Kostant cross--section theorem
for the action of a unipotent group.
\end{abstract}

\keywords{ Algebraic group, Transversal slice }

\maketitle

\pagestyle{myheadings}

\markboth{ALEXEY SEVOSTYANOV}{ALGEBRAIC GROUP ANALOGUES OF SLODOWY SLICES}

\renewcommand{\theequation}{\thesection.\arabic{equation}}

\section{Introduction}

\setcounter{equation}{0}

Let $\g$ be a complex semisimple Lie algebra, $G$ the adjoint
group of $\g$, $e\in \g$ a nonzero nilpotent element in $\g$. By
the Jacobson--Morozov theorem there is an
$\mathfrak{sl}_2$--triple $(e,h,f)$ associated to $e$, i.e.
elements $f,h\in \g$ such that $[h,e]=2e$, $[h,f]=-2f$, $[e,f]=h$.
Fix such an $\mathfrak{sl}_2$--triple.

Let $z(f)$ be the centralizer of $f$ in $\g$. The affine space
$s(e)=e+z(f)$ is called the Slodowy slice to the adjoint orbit of
$e$ at point $e$. Slodowy slices were introduced in \cite{SL} as a
technical tool for the study of the singularities of the adjoint
quotient of $\g$. We recall that if $\h$ is a Cartan subalgebra in
$\g$ and $W$ is the Weyl group of $\g$ then, after identification
$\g\simeq \g^*$ with the help of the Killing form, the adjoint
quotient can be defined as the morphism $\delta_\g: \g \rightarrow
\h/W$ induced by the inclusion $\mathbb{C}[\h]^W\simeq
\mathbb{C}[\g]^G\hookrightarrow \mathbb{C}[\g]$. The fibers of
$\delta_\g$ are unions of adjoint orbits in $\g$. Each fiber of
$\delta_\g$ contains a single orbit which consists of regular
elements. The singularities of the fibers correspond to irregular
elements.

Slodowy studied the singularities of the adjoint quotient by
restricting the morphism $\delta_\g$ to the slices $s(e)$ which turn
out to be transversal to the adjoint orbits in $\g$. In
particular, for regular nilpotent $e$ the restriction
$\delta_\g:s(e)\rightarrow \h/W$ is an isomorphism, and $s(e)$ is a
cross--section for the set of the adjoint orbits of regular
elements in $\g$. For subregular $e$ the fiber $\delta_\g^{-1}(0)$
has one singular point which is a simple singularity, and $s(e)$
can be regarded as a deformation of this singularity.

In this paper we are going to outline a similar construction for algebraic groups.
Let $G$ be a complex simple algebraic group with Lie algebra $\g$.
In case of algebraic groups, instead of the adjoint quotient map, one should consider the conjugation quotient map
$\delta_G: G \rightarrow H/W$ generated by the inclusion $\mathbb{C}[H]^W\simeq
\mathbb{C}[G]^G\hookrightarrow \mathbb{C}[G]$, where $H$ is the maximal torus of $G$ corresponding to the Cartan subalgebra
$\h$ and $W$ is the Weyl group of the pair $(G, H)$. Some fibers of this map are singular and one can study these singularities
by restricting $\delta_G$ to certain transversal slices to conjugacy classes in $G$. We are going to define such slices in this paper.

Note that the fibers of the adjoint quotient map and those of the conjugation quotient map are generally not isomorphic.
More precisely, by Theorem 3.15 in \cite{SL} there are open neighborhoods $U$ of $1$ in $H/W$ and $U'$ of $0$ in
$\h/W$ and a surjective morphism $\gamma:U\rightarrow U'$ such that the fibers $\delta_G^{-1}(u)$ and
$\delta_{\g}^{-1}(\gamma(u))$ are isomorphic for $u\in U$ as $G$--spaces. But globally such isomorphisms do not exist.
In fact any fiber of the adjoint quotient map can be translated by a contracting $\mathbb{C}^*$--action to a fiber over any
small neighborhood of $0$ in $\h/W$. But there is no similar statement for the fibers of the conjugation quotient map,
and the problem of the study of singularities of the fibers of the conjugation quotient map and of their resolutions and
deformations is more difficult than the same problem in case of the adjoint quotient map.

The other construction where the Slodowy slices or, more precisely, noncommutative deformations of
algebras of regular functions on these slices, play an important
role is the Whittaker or, more generally, generalized
Gelfand--Graev representations of the Lie algebra $\g$ (see
\cite{Ka,K}). Namely, to each nilpotent element in $\g$ one can associate the corresponding category of Gelfand--Graev representations.
The category of
generalized Gelfand--Graev representations associated to a nilpotent element $e\in \g$ is equivalent to the
category of finitely generated left modules over the W--algebra associated to $e$. This
remarkable result was proved by Kostant in case of regular
nilpotent $e\in \g$ (see \cite{K}) and by Skryabin in the general case
(see Appendix to \cite{Pr}). A more direct proof of Skryabin's
theorem was obtained in \cite{GG}. This proof, as well as the
original Kostant's proof, is based on the study of the commutative
graded algebra associated to the algebra W--algebra.

The main observation of \cite{GG,K} is that this commutative algebra is isomorphic to the
algebra of regular functions on the Slodowy slice $s(e)$. This isomorphism is established with the help of
a cross--section theorem proved in \cite{K} in case of regular
nilpotent $e$ and in \cite{DB,GG} in the general case. We briefly recall the main statement of this theorem.

Let $\chi$ be the element of
$\g^*$ which corresponds to $e$ under the isomorphism $\g\simeq
\g^*$ induced by the Killing form. Under the action of ${\rm ad}~h$
we have a decomposition
\begin{equation}\label{deco}
\g=\oplus_{i\in \mathbb{Z}}\g(i),~{\rm where}~ \g(i)=\{x\in \g
\mid [h,x]=ix\}.
\end{equation}
The skew--symmetric bilinear form $\omega$ on $\g(-1)$ defined by
$\omega(x,y)=\chi([x,y])$ is nondegenerate. Fix an isotropic
subspace $l$ of $\g(-1)$ with respect to $\omega$ and denote by
${l}^{\perp_\omega}$ the annihilator of ${l}$ with respect to
$\omega$.

Let
\begin{equation}\label{nopno}
\m_l=l\oplus \bigoplus_{i\leq -2}\g(i),~~\n_l
={l}^{\perp_\omega}\oplus \bigoplus_{i\leq -2}\g(i).
\end{equation}
Note that $\m_l\subset \n_l$, both $\m_l$ and $\n_l$ are nilpotent
Lie subalgebras of $\g$.

The Kostant cross--section theorem
asserts that the adjoint action map $N_l\times s(e)\rightarrow
e+\m_l^{\perp_\g}$ is an isomorphism of varieties, where
$N_l$ is the
Lie subgroup of $G$ corresponding to the Lie subalgebra
$\n_l\subset \g$, and $\m_l^{\perp_\g}$ is the annihilator of $\m_l$ in $\g$ with
respect to the Killing form.

Now a natural question is: are there any analogues of the Slodowy
slices for algebraic groups? According to a general theorem proved in \cite{SL}, if $G$ is an algebraic group one can
construct a transversal slice to the set of $G$--orbits at each point of any variety $V$ equipped with a $G$--action. In particular, such
slices exist for $V=G$ equipped with conjugation action.
But we are rather interested in special transversal slices which can be used for the study of
singularities of the conjugation quotient map and for which an analogue of the Kostant cross--section theorem holds.

In paper \cite{St2} R. Steinberg
introduced a natural analogue of the slice $s(e)$ for regular
nilpotent $e$. We recall that $s(e)$ is a cross--section for the
set of adjoint orbits of regular elements in $\g$. In paper
\cite{St2} a cross--section for the set of conjugacy classes of
regular elements in the connected simply connected group $G'$ with
Lie algebra $\g$ is constructed. We briefly recall Steinberg's
construction.

If $e$ is regular nilpotent then, in the notation introduced
above, $\g(-1)=0$, and $\n_l=\m_l=\n$, where $\n$ is a maximal
nilpotent subalgebra of $\g$. Let $\p=\bigoplus_{i\leq 0}\g(i)$ be
the Borel subalgebra containing $\n$, $\h=\g(0)$ the Cartan
subalgebra of $\g$, and $W$ the Weyl group of the pair $(\g,\h)$.
Fix a system of positive simple roots associated to the pair
$(\h,\p)$. Let $s\in W$ be a Coxeter element, i.e. a product of
the reflections corresponding to the simple roots. Fix a
representative for $s$ in $G'$. We denote this representative by
the same letter. Let $N$ be the unipotent subgroup in $G'$
corresponding to the Lie algebra $\n$, and $\overline P$ the
opposite Borel subgroup with Lie algebra $\overline
\p=\bigoplus_{i\geq 0}\g(i)$.

Steinberg introduced a subgroup $N_s\subset N$, $ N_s=\{n\in N\mid
sns^{-1}\in \overline P\}, $ and proved that the set $N_ss^{-1}$
is a cross--section for the set of conjugacy classes of regular
elements in the connected simply connected algebraic group $G'$
associated to the Lie algebra $\g$. Moreover, in \cite{STSh} it is
shown that the conjugation map $N\times N_ss^{-1}\rightarrow
Ns^{-1}N$ is an isomorphism of varieties. The last statement is an
algebraic group analogue of the Kostant cross--section theorem.

As it was observed in \cite{S2,S3}
the analogue of the Kostant cross--section theorem for the slice
$N_ss^{-1}$ is the main ingredient of the construction of the Whittaker model of the center of the quantum group
and of the Whittaker representations for quantum groups.
The purpose of this paper is to construct other special transversal slices to
conjugacy classes in a complex semisimple algebraic group $G$ and
to find an analogue of the Kostant cross--section theorem for
these slices. We expect that these results can be applied to define deformed W--algebras and the generalized
Gelfand--Graev representations for quantum groups. An initial step in this programme was realized in preprint \cite{S}
where the Poisson deformed W--algebras are defined with the help of Poisson reduction in algebraic Poisson--Lie groups.

As we shall see in Section \ref{slices1} transversal
slices in $G$ similar to the Steinberg slice appear in a quite
general setting. They are associated to Weyl group elements $s\in W$. Such a slice always contains
a representative $s^{-1}\in G$ of the element $s^{-1}\in W$. Note that since any Weyl group element
$s\in W$ has finite order its representative $s\in G$ is semisimple,
and hence the slice associated to $s$ is always transversal
to the set of conjugacy classes in $G$ at the semisimple element $s^{-1}\in G$ while
the Slodowy slice associated to a nilpotent element $e$ is always transversal
to the set of adjoint orbits at the nilpotent element $e$.

An important ingredients of our construction are parabolic subgroups $P$ associated to Weyl group elements $s\in W$ in such a way that the semisimple part of the Levi factor of a parabolic subgroup $P$ associated to $s\in W$ is contained in the centralizer of the normal representative $s\in G$ of the Weyl group element $s\in W$.
The parabolic subgroups associated to elements of the Weyl group
play the role of parabolic subalgebras $\p=\bigoplus_{i\leq 0}\g(i)$ associated to nilpotent
elements of $\g$ with the help of grading (\ref{deco}).

The new transversal slices in $G$ are of the form $N_sZs^{-1}$ where $s\in G$ is the normal representative of an element of the Weyl group $W$, $N_s=\{n\in N\mid sns^{-1}\in \overline N\}$, $N$ is the unipotent radical of a parabolic subgroup $P$ corresponding to $s$ with Levi factor $L$, $\overline N$ is the unipotent radical of the opposite parabolic subgroup, and $Z=\{z\in L\mid szs^{-1}=z\}$ is the centralizer of $s$ in $L$. In Proposition \ref{prop2} we prove that the quotient $NZs^{-1}N/N$ with respect to the action of $N$ on $NZs^{-1}N$ by conjugations is isomorphic to $N_sZs^{-1}$, and hence $N_sZs^{-1}$ is a subvariety of $G/N$. This is the algebraic group analogue of the Kostant cross--section theorem.

In Section \ref{sectsub} we apply the results of Section \ref{slices1} to describe simple singularities
in terms of transversal slices in algebraic groups. In our construction we use the subregular slices of dimension ${\rm rank}~G+2$ associated to the elements of the Weyl
group which are related to subregular nilpotent elements in $\g$ via the Kazhdan--Lusztig map (see \cite{KL}).
The intersections of the subregular slices $N_sZs^{-1}$ of dimension ${\rm rank}~G +2$ with the fibers of the conjugation quotient map may only have simple isolated singularities.

{\bf Acknowledgement}

The author is grateful to A. Berenstein, P. Etingof, L. Feh\'{e}r, A. Levin and  A. Premet for useful
discussions. This paper was completed during
my stay in Max--Planck--Institut f\"{u}r Mathematik, Bonn in April--May 2009. I would like to thank Max--Planck--Institut
f\"{u}r Mathematik, Bonn for hospitality.


\section{Transversal slices to conjugacy classes in algebraic
groups} \label{slices1}

\setcounter{equation}{0}

In this section we introduce algebraic group counterparts of the
Slodowy slices and prove an analogue of the Kostant cross--section
theorem for them.  The new slices will be associated to Weyl group elements and to certain parabolic subgroups defined with the help of the Weyl group elements.
We start with some preliminary facts about Weyl group actions on root systems.

Let $G$ be a complex semisimple (connected) algebraic group, $\g$ its Lie
algebra, $H$ a maximal torus in $G$. Denote by $\h$ the Cartan subalgebra in $\g$ corresponding to $H$. Let $\Delta$ be the root system of the pair $(\g,\h)$. For any root $\alpha\in \Delta$ we denote by $\alpha^\vee\in \h$ the corresponding coroot.

Let $s$ be an element of the Weyl group $W$ of the pair $(\g,\h)$ and $\h_{\mathbb{R}}$ the real form of $\h$, the real linear span of simple coroots in $\h$. The set of roots $\Delta$ is a subset of the dual space $\h_\mathbb{R}^*$.

The Weyl group element $s$ naturally acts on $\h_{\mathbb{R}}$ as an orthogonal transformation with respect to the scalar product induced by the Killing form of $\g$. Using the spectral theory of orthogonal transformations we can decompose $\h_{\mathbb{R}}$ into a direct orthogonal sum of $s$--invariant subspaces,
\begin{equation}\label{hdec}
\h_\mathbb{R}=\bigoplus_{i=0}^{K} \h_i,
\end{equation}
where we assume that $\h_0$ is the linear subspace of $\h_{\mathbb{R}}$ fixed by the action of $s$, and each of the other subspaces $\h_i\subset \h_\mathbb{R}$, $i=1,\ldots, K$, is either two--dimensional or one--dimensional and the Weyl group element $s$ acts on it as rotation with angle $\theta_i$, $0<\theta_i<\pi$ or as the reflection with respect to the origin, respectively. Note that since $s$ has finite order $\theta_i=\frac{2\pi}{m_i}$, $m_i\in \mathbb{N}$.

Since the number of roots in the root system $\Delta$ is finite one can always choose elements $h_i\in \h_i$, $i=0,\ldots, K$, such that $h_i(\alpha)\neq 0$ for any root $\alpha \in \Delta$ which is not orthogonal to the $s$--invariant subspace $\h_i$ with respect to the natural pairing between $\h_{\mathbb{R}}$ and $\h_{\mathbb{R}}^*$.

Now we consider certain $s$--invariant subsets of roots $\overline{\Delta}_i$, $i=0,\ldots, K$, defined as follows
\begin{equation}\label{di}
{\overline{\Delta}}_i=\{ \alpha\in \Delta: h_j(\alpha)=0, j>i,~h_i(\alpha)\neq 0 \},
\end{equation}
where we formally assume that $h_{K+1}=0$.
Note that for some indexes $i$ the subsets ${\overline{\Delta}}_i$ are empty, and that the definition of these subsets depends on the order of terms in direct sum (\ref{hdec}).

We also define other $s$--invariant subsets of roots ${\Delta}_{i_k}$, $k=0,\ldots, M$ for all indexes $i_k>0$, $k=1,\ldots, M$ such that $\overline{\Delta}_{i_k}$,
\begin{equation}\label{dik}
{\Delta}_{i_k}=\bigcup_{i_j\leq i_k}\overline{\Delta}_{i_j}.
\end{equation}
For convenience we assume that indexes $i_k$ are labeled in such a way that $i_j<i_k$ if and only if $j<k$.
According to this definition we have a chain of strict inclusions
\begin{equation}\label{inc}
\Delta_{i_M}\supset\Delta_{i_{M-1}}\supset\ldots\supset\Delta_{i_0},
\end{equation}
such that $\Delta_{i_M}=\Delta$, $\Delta_{0}=\{\alpha \in \Delta: s\alpha=\alpha\}$ is the set of roots fixed by the action of $s$, and ${\Delta}_{i_k}\setminus {\Delta}_{i_{k-1}}=\overline{\Delta}_{i_k}$. Observe also that the root system $\Delta$ is the disjoint union of the subsets $\overline{\Delta}_{i_k}$,
$$
\Delta=\bigcup_{k=0}^{M}\overline{\Delta}_{i_k}.
$$

Now assume that
\begin{equation}\label{cond}
|h_{i_k}(\alpha)|>|\sum_{l\leq j<k}h_{i_j}(\alpha)|, ~{\rm for~any}~\alpha\in \overline{\Delta}_{i_k},~k=0,\ldots, M,~l<k.
\end{equation}
Condition (\ref{cond}) can be always fulfilled by suitable rescalings of the elements $h_{i_k}$.

Consider the element
$$
\bar{h}=\sum_{k=0}^{M}h_{i_k}\in \h_\mathbb{R}.
$$
From definition (\ref{di}) of the sets $\overline{\Delta}_i$ we obtain that for $\alpha \in \overline{\Delta}_{i_k}$
\begin{equation}\label{dech}
\bar{h}(\alpha)=\sum_{j\leq k}h_{i_j}(\alpha)=h_{i_k}(\alpha)+\sum_{j< k}h_{i_j}(\alpha)
\end{equation}
Now condition (\ref{cond}), the previous identity and the inequality $|x+y|\geq ||x|-|y||$ imply that for $\alpha \in \overline{\Delta}_{i_k}$ we have
$$
|\bar{h}(\alpha)|\geq ||h_{i_k}(\alpha)|-|\sum_{j< k}h_{i_j}(\alpha)||>0.
$$
Since $\Delta$ is the disjoint union of the subsets $\overline{\Delta}_{i_k}$, $\Delta=\bigcup_{k=0}^{M}\overline{\Delta}_{i_k}$, the last inequality ensures that  $\bar{h}$ belongs to a Weyl chamber of the root system $\Delta$, and one can define the subset of positive roots $\Delta_+$ and the set of simple positive roots $\Gamma$ with respect to that chamber. From condition (\ref{cond}) and formula (\ref{dech}) we also obtain that a root $\alpha \in \overline{\Delta}_{i_k}$ is positive if and only if $h_{i_k}(\alpha)>0$.

To define the algebraic group analogues of the Slodowy slices we shall also need a parabolic subalgebra $\p$ of $\g$ associated to the semisimple element $\bar{h}_0=\sum_{k=0,i_k>0}^{M}h_{i_k}\in \h_\mathbb{R}$ associated to $s\in W$. This subalgebra is defined with the help of the linear eigenspace decomposition of $\g$ with respect to the adjoint action of $\bar{h}_0$ on $\g$, $\g=\bigoplus_{m}(\g)_m$, $(\g)_m=\{ x\in \g \mid [\bar{h}_0,x]=mx\}$, $m \in \mathbb{R}$. By definition $\p=\bigoplus_{m\leq 0}(\g)_m$ is a parabolic subalgebra in $\g$, $\n=\bigoplus_{m<0}(\g)_m$ and $\l=\{x\in \g \mid [\bar{h}_0,x]=0\}$ are the nilradical and the Levi factor of $\p$, respectively. We denote by $P$ the corresponding parabolic subgroup of $G$, by $N$ the unipotent radical of $P$ and by $L$ the Levi factor of $P$. The subgroups of $P$, $N$ and $L$ have Lie algebras $\p$, $\n$ and $\l$, respectively, and both $P$ and $L$ are connected. Note that we have natural inclusions of Lie algebras $\p\supset\b\supset\n$, where $\b$ is the Borel subalgebra of $\g$ corresponding to the system $-\Gamma$ of simple roots, and $\Delta_{0}$ is the root system of the reductive Lie algebra $\l$.

Let $X_\alpha\subset \g$ be the root subspace of $\g$ corresponding to root $\alpha \in \Delta$.
Fix a system of root vectors $e_\alpha\in X_{\alpha}, \alpha \in \Delta$ such that if $[e_{\alpha},e_\beta]=N_{\alpha,\beta}e_{\alpha+\beta}\in X_{\alpha+\beta}$ for any pair $\alpha,\beta\in \Gamma$ of simple positive roots then $[e_{-\alpha},e_{-\beta}]=N_{\alpha,\beta}e_{-\alpha-\beta}\in X_{-\alpha-\beta}$.

Recall that by Theorem 5.4.2. in \cite{GG1} one can uniquely choose a representative $s\in G$ for the Weyl group element $s\in W$ in such a way that the operator ${\rm Ad}s$ sends root vectors $e_{\pm \alpha}\in X_{\pm \alpha}$ to $e_{\pm s\alpha}\in X_{\pm s\alpha}$ for any simple positive root $\alpha \in \Gamma$.
We denote this representative by the same letter, $s\in G$. The representative $s\in G$ is called the normal representative of the Weyl group element $s\in W$. If the order of the Weyl group element $s\in W$ is equal to $R$ then the inner automorphism ${\rm Ad}s$ of the Lie algebra $\g$ has order at most $2R$, ${\rm Ad}s^{2R}={\rm id}$. We also recall that the operator ${\rm Ad}s$ sends each root subspace $X_\alpha\subset \g$, $\alpha \in \Delta$  to $X_{s\alpha}$.

The element $s\in G$ naturally acts on $G$
by conjugations. Let $Z$ be the set of $s$-fixed points in $L$,
\begin{equation}\label{defz}
Z=\{z\in L\mid szs^{-1}=z\},
\end{equation}
and
\begin{equation}\label{defns}
N_s=\{n\in N\mid sns^{-1}\in \overline N\},
\end{equation}
where $\overline N$ is the unipotent radical of the parabolic subgroup $\overline P\subset G$ opposite to $P$. Note that ${\rm dim}~N_s=l(s)$, where $l(s)$ is the length of the Weyl group element $s\in W$ with respect to the system $\Gamma$ of simple roots.
Clearly, $Z$ and $N_s$ are subgroups in $G$, and $Z$ normalizes
both $N$ and $N_s$. Denote by $\n_s$ and $\z$ the Lie algebras of
$N_s$ and $Z$, respectively.

Note that, since the operator ${\rm Ad}s$ sends root vectors $e_{\pm \alpha}\in X_{\pm \alpha}$ to $e_{\pm s\alpha}\in X_{\pm s\alpha}$ for any simple positive root $\alpha \in \Gamma$ and the root system of the reductive Lie algebra $\l$ is fixed by the action of $s$, the semisimple part $\m$ of the Levi subalgebra $\l$ is fixed by the action of ${\rm Ad}s$. In fact in this case $\z=\m\oplus \h_z$ and $\z \cap
\h= \mathbb{C}\h_0$, where $\h_z$ is a Lie subalgebra of the center of $\l$ and $\mathbb{C}\h_0$ is the linear subspace of $\h$ fixed by the action of $s$.

Now consider the subvariety $N_sZs^{-1}\subset G$. We shall prove that the variety $N_sZs^{-1}$ is transversal to the set of conjugacy classes in $G$. In order to do that we shall need
the following statement which is an
analogue of the Kostant cross--section theorem for the subvariety
$N_sZs^{-1}\subset G$.
\begin{proposition}\label{prop2}
Let $s\in W$ be an element of the Weyl group $W$ of the pair $(\g,\h)$.
Let $\overline{h}_0\in \h_\mathbb{R}$ be a semisimple element associated to $s$, $\overline{h}_0= \sum_{k=0,i_k>0}^{M}h_{i_k}$, where elements $h_{i_k}\in \h_{i_k}$ satisfy conditions (\ref{cond}) and $\h_{i_k}\subset \h_\mathbb{R}$ are the subspaces of $\h_\mathbb{R}$ defined in (\ref{hdec}). Let $\n\subset \g$ be the nilradical of the parabolic subalgebra $\p$ defined with the help of $\overline{h}_0$, $\n=\bigoplus_{m<0}(\g)_m$,$(\g)_m=\{ x\in \g \mid [\bar{h}_0,x]=mx\}$, $m \in \mathbb{R}$ and $\l=(\g)_0$ the Levi factor of $\p$. Denote by $P$, $N$ and $L$ the corresponding subgroups of $G$ and by $s\in G$ the normal representative of the Weyl group element $s\in W$. Let $Z$ be the centralizer of $s$ in $L$,
$$
Z=\{z\in L\mid szs^{-1}=z\},
$$
and
$$
N_s=\{n\in N\mid sns^{-1}\in \overline N\},
$$
where $\overline N$ is the unipotent radical of the parabolic subgroup $\overline P\subset G$ opposite to $P$.
Then the conjugation map
\begin{equation}\label{cross}
\alpha: N\times N_sZs^{-1}\rightarrow NZs^{-1}N
\end{equation}
is an isomorphism of varieties.
\end{proposition}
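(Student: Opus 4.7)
My plan is to show that $\alpha$ is a bijective morphism of irreducible varieties of equal dimension and to exhibit an explicit algebraic inverse, whence $\alpha$ is an isomorphism of varieties. Well-definedness is immediate from
$$n(n_s z s^{-1})n^{-1} \;=\; (nn_s)\,z\,s^{-1}\,n^{-1} \;\in\; N\cdot Z\cdot s^{-1}\cdot N,$$
and a dimension count shows source and target match: the source has dimension $\dim N + l(s) + \dim Z$, while the target is the image of the multiplication morphism $\mu\colon N\times Z\times N\to G$, $(n_1,z,n_2)\mapsto n_1 z s^{-1} n_2$, whose generic fibers are torsors over the subgroup $\widetilde N^+:= N\cap s N s^{-1}$ of dimension $\dim N - l(s)$.

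The decisive step is bijectivity. Given $g=u_0 z_0 s^{-1} v_0$, a direct calculation (using that $Z\subset L$ normalizes $N$ and that $s^{-1}\widetilde N^+ s\subset N$) shows that every other representation $g=u z s^{-1} v$ arises from $(u_0,z_0,v_0)$ by the action of $\widetilde N^+$: one has $u=u_0 c$, $v=c'v_0$, $z=z_0$ with $c'\in \widetilde N^+$ and $c = z_0\bigl(s^{-1}(c')^{-1}s\bigr)z_0^{-1}$. On the other hand, $\alpha(n,n_s z' s^{-1})=g$ forces, via $u=nn_s$, $v=n^{-1}$, the additional condition $vu\in N_s$. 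Combining these, producing a preimage of $g$ under $\alpha$ is equivalent to finding a unique $c'\in \widetilde N^+$ such that $c'(v_0u_0)c\in N_s$. Passing to Lie-algebra coordinates $c'=\exp Y$ with $Y\in \widetilde \n^+:=\n\cap \mathrm{Ad}(s)\n$, this constraint becomes, to leading order, the linear equation
$$\pi\bigl((1-\mathrm{Ad}(z_0 s^{-1}))Y\bigr) \;=\; -\pi(\log v_0u_0),$$
where $\pi\colon \n\to \n^+$ is the projection along the decomposition $\n=\n^+\oplus \n_s$, with $\n^+$ spanned by the root vectors $e_\alpha$ for $\alpha\in \Delta_\n$ satisfying $s\alpha\in \Delta_\n$. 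I would then solve the full equation inductively along the descending central series of $N$: on each successive graded piece, the higher-order commutators are already determined by the previous steps, so the equation keeps the same leading shape and determines the next piece of $Y$ polynomially in the data, producing an algebraic inverse to $\alpha$.

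The main technical obstacle is establishing that the linear operator $\pi\circ (1-\mathrm{Ad}(z_0 s^{-1}))\colon \widetilde\n^+\to \n^+$ is an isomorphism. Since $\dim\widetilde\n^+=\dim\n^+=\dim N - l(s)$, it suffices to check injectivity. No individual root of $\n$ is fixed by $s$ (any $s$-fixed root lies in the Levi subsystem $\Delta_{i_0}$ and hence outside $\Delta_\n$), so $\mathrm{Ad}(s)$ has no root-vector fixed points in $\widetilde\n^+$; however, one must also exclude $\mathrm{Ad}(s)$-fixed vectors arising from longer cyclic $s$-orbits in $\Delta_\n$ that alternate between $\n^+$ and $\n_s$. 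Here one uses a careful orbit-by-orbit analysis, together with the fact that $z_0$ is semisimple and preserves the decomposition $\n=\n^+\oplus \n_s$, to check that such apparent fixed vectors are projected into $\n_s$ by $\pi$ and so do not obstruct invertibility. Once this invertibility is in place, the inductive scheme above produces a polynomial inverse to $\alpha$, completing the proof that $\alpha$ is an isomorphism of varieties.
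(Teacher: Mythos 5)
Your reduction is a genuinely different route from the paper's: instead of the paper's double induction over the chain of reductive subgroups $G_{i_k}$ and the sector decomposition of $\overline{\Delta}_{i_k}$ into the sets $\overline{\Delta}_{i_k}^l$ (the ``rotation'' picture of Figure 1), you describe the fiber of the multiplication map $N\times Z\times N\to NZs^{-1}N$ as a torsor over $\widetilde N^+=N\cap sNs^{-1}$ and reduce everything to solving $c'(v_0u_0)c\in N_s$ for a unique $c'\in\widetilde N^+$. That reduction is correct (the uniqueness of $z=z_0$ and the torsor structure do follow from the $NL\overline N$ decomposition), and if completed it would even yield a polynomial inverse directly, bypassing Zariski's main theorem. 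However, there is a genuine gap exactly where you locate ``the main technical obstacle'': you never prove that $\pi\circ(1-\mathrm{Ad}(z_0s^{-1}))\colon\widetilde\n^+\to\n^+$ is injective. The justification you offer is vague and partly wrong: $z_0\in Z$ need \emph{not} be semisimple ($Z$ contains the whole semisimple part $\m$ of the Levi factor $\l$, hence unipotent elements, whenever $s$ fixes some roots), and the assertion that the problematic fixed vectors ``are projected into $\n_s$ by $\pi$'' is not an argument --- if an entire $\langle s\rangle$--orbit of roots lay in $\Delta(\n^+)$, then $\pi$ would act as the identity on the corresponding span and $1-\mathrm{Ad}(z_0s^{-1})$ could well have a kernel there. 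This lemma is precisely where the real content of the paper's proof lives, so omitting it leaves the proof incomplete.

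The lemma is in fact true, and here is the missing argument you would need. Writing $\sigma=\mathrm{Ad}(s^{-1})\colon\widetilde\n^+\to\n^+$ (an isomorphism), invertibility of $\pi-\mathrm{Ad}(z_0)\sigma$ is equivalent to $1$ not being an eigenvalue of $T=\mathrm{Ad}(z_0)^{-1}\mathrm{Ad}(s)\,\pi$ on $\widetilde\n^+$; since $\mathrm{Ad}(z_0)$ commutes with $\mathrm{Ad}(s)$ and with $\pi$, it suffices to show $S=\mathrm{Ad}(s)\,\pi$ is nilpotent. Now $S$ sends $X_\alpha$ to $X_{s\alpha}$ if $s\alpha\in\Delta(\n)$ and to $0$ otherwise, so $S^k(X_\alpha)\neq 0$ forces $\alpha,s\alpha,\dots,s^k\alpha\in\Delta(\n)$. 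If a full $\langle s\rangle$--orbit stayed in $\Delta(\n)$, its average would be $s$--invariant, hence orthogonal to $\bar h_0$, contradicting $\bar h_0(\beta)<0$ for all $\beta\in\Delta(\n)$. So every orbit exits $\Delta(\n)$, $S$ is nilpotent, and the operator is invertible. (This is the linearized shadow of the paper's geometric observation that repeated application of $s^{-1}$ rotates every sector of $-(\overline{\Delta}_{i_k})_+$ into the opposite half--plane.) You would still need to write out the induction along the lower central series carefully, but with the lemma in place that part is routine.
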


\begin{proof}
First observe that using a decomposition of $N$ as a product of one--dimensional subgroups corresponding to roots one can write $N=N_sN'_s$, where $N_s'=N\bigcap s^{-1}Ns$, and hence
$$
NZs^{-1}N=N_sN'_ss^{-1}NZ=N_ss^{-1}NZ=N_sZs^{-1}N.
$$

In order to prove that map (\ref{cross}) is an isomorphism is suffices to show that this map is bijective. Then by Zariski's main theorem the map $\alpha$ is an isomorphism of varieties.

Observe that map (\ref{cross}) is bijective if and only if for any given $n_s\in N_s, u\in N$ and $z\in Z$ the equation
\begin{equation}\label{tpr}
n_szs^{-1}u=nn_s'z's^{-1}n^{-1}
\end{equation}
has a unique solution $n\in N,n_s'\in N_s,z'\in Z$.
We prove the last statement by induction over certain $s$--invariant reductive subgroups in $G$ that we are going to define now. Consider the reductive Lie subalgebras $\g_{i_k}$, $k=0,\ldots ,M$ defined by induction as follows: $\g_{i_M}=\g$, $\g_{i_{k-1}}=\z_{\g_{i_k}}(h_{i_k})$, where $\z_{\g_{i_k}}(h_{i_k})$ is the centralizer of $h_{i_k}$ in $\g_{i_k}$. We denote by $G_{i_{k}}$ the corresponding subgroups in $G$.

By construction $\Delta_{i_k}$ is the root system of $\g_{i_k}$, and we have chains of strict inclusions
\begin{eqnarray}\label{incg}
\g=\g_{i_M}\supset\g_{i_{M-1}}\supset\ldots\supset\g_{0}=\l, \\
G=G_{i_M}\supset G_{i_{M-1}}\supset\ldots\supset G_{0}=L
\end{eqnarray}
corresponding to inclusions (\ref{inc}). Note that $\g_{i_{k-1}}$ is the Levi factor of the parabolic subalgebra $\p_{i_{k-1}}\subset \g_{i_k}$ associated to the semisimple element $h_{i_k}$ in the same way as $\p\subset \g$ is associated to $\bar{h}_0$, i.e. if $\g_{i_k}=\bigoplus_{m}(\g_{i_k})_m$, $(\g_{i_k})_m=\{ x\in \g_{i_k} \mid [h_{i_k},x]=mx\}$, $m \in \mathbb{R}$ then $\p_{i_{k-1}}=\bigoplus_{m\leq 0}(\g_{i_k})_m$, and ${\n}_{i_{k-1}}=\bigoplus_{m<0}(\g_{i_k})_m$ is the nilradical of $\p_{i_{k-1}}$, ${\g}_{i_{k-1}}=(\g_{i_k})_0$ is the Levi factor of $\p_{i_{k-1}}$. We also denote by $\overline{\n}_{i_{k-1}}=\bigoplus_{m>0}(\g_{i_k})_m$ the nilradical of the opposite parabolic subalgebra. Let $P_{i_{k-1}}$, ${N}_{i_{k-1}}$, and $\overline{N}_{i_{k-1}}$ be the corresponding subgroups of $G_{i_{k}}$. Below we shall need the following direct decompositions of linear spaces
\begin{eqnarray}\label{d1}
\g_{i_k}=\p_{i_{k-1}}+\overline{\n}_{i_{k-1}}={\n}_{i_{k-1}}+\g_{i_{k-1}}+\overline{\n}_{i_{k-1}}, \\
\n=\sum_{k=0}^{M-1}{\n}_{i_{k}} \label{d2}
\end{eqnarray}
following straightforwardly from the definitions of the subalgebras $\p_{i_{k-1}}$, ${\n}_{i_{k}}$ and $\overline{\n}_{i_{k-1}}$, and the root system decomposition
$\Delta_+=\bigcup_{k=0}^{M}(\overline{\Delta}_{i_k})_+$, $(\overline{\Delta}_{i_k})_+=\overline{\Delta}_{i_k}\bigcap \Delta_+=\{ \alpha \in {\Delta}_{i_k},h_{i_k}(\alpha)>0\}$. Decompositions (\ref{d1}) imply  decompositions of a dense subset $\overline{G}_{i_{k}}\subset G_{i_{k}}$,
\begin{equation}\label{d11}
\overline{G}_{i_{k}}=P_{i_{k-1}}\overline{N}_{i_{k-1}}={N}_{i_{k-1}}G_{i_{k-1}}\overline{N}_{i_{k-1}},
\end{equation}
and decomposition (\ref{d2}) implies two decompositions
\begin{equation}\label{d22}
N=N_{i_{M-1}}N_{i_{M-2}}\ldots N_{i_{0}}, N=N_{i_{0}}N_{i_{1}}\ldots N_{i_{M-1}}.
\end{equation}
Note that, since the subsets of roots $\Delta_{i_k}$ are $s$--invariant, the subalgebras $\g_{i_k}$ are ${\rm Ad}s$--invariant and the subgroups $G_{i_{k}}$ are invariant with respect to the action of $s$ on $G$ by conjugations.

Applying decomposition (\ref{d11}) successively we also obtain decompositions of dense subsets $\overline{G}_k\subset G$,
\begin{equation}\label{d111}
\overline{G}_{k}=N_{k}G_{i_{k}}\overline{N}_{k},~N_{k}=N_{i_{M-1}}N_{i_{M-2}}\ldots N_{i_{k}}, \overline{N}_{k}=\overline{N}_{i_{M-1}}\overline{N}_{i_{M-2}}\ldots \overline{N}_{i_{k}}.
\end{equation}

Due to the inclusions
\begin{equation}\label{ngcr}
[\g_{i_{k-1}},{\n}_{i_{k-1}}]\subset {\n}_{i_{k-1}}, [\g_{i_{k-1}},\overline{{\n}}_{i_{k-1}}]\subset \overline{{\n}}_{i_{k-1}}, {\n}_{i_{k-2}}\subset {\g}_{i_{k-1}}, \overline{{\n}}_{i_{k-2}}\subset {\g}_{i_{k-1}}
\end{equation}
$N_{k}$ and $\overline{N}_{k}$ are Lie subgroups in $G$ with Lie algebras $\sum_{m=k}^{M-1}{\n}_{i_{m}}$ and  $\sum_{m=k}^{M-1}\overline{{\n}}_{i_{m}}$, respectively.

We shall prove that equation (\ref{tpr}) has a unique solution by induction over the reductive subgroups $G_{i_k}$ starting with $k=0$. First we rewrite equation (\ref{tpr}) in a slightly different form,
\begin{equation}\label{tpr1}
n_szs^{-1}us=nn_s'z's^{-1}n^{-1}s,~n,u\in N,n_s,n_s'\in N_s,z,z'\in Z.
\end{equation}

To establish the base of induction we first observe that both the l.h.s. and the r.h.s. of equation (\ref{tpr1}) belong to the dense subset $\overline{G}_0\subset G$ and that
the $G_{0}=L$--component of equation (\ref{tpr1}) with respect to decomposition (\ref{d111}) for $k=0$ is reduced to \begin{equation}\label{eqz}
z=z'.
\end{equation}

Indeed, using a decomposition of $N$ as a product of one--dimensional subgroups corresponding to roots one can write $N=N'_{s^{-1}}N_{s^{-1}}$ and $N=N_{s^{-1}}N'_{s^{-1}}$, where $N_{s^{-1}}'=N\bigcap sNs^{-1}$, $N_{s^{-1}}=\{n\in N,s^{-1}ns\in \overline N\}$, and hence
\begin{equation}\label{sN}
s^{-1}Ns=s^{-1}N_{s^{-1}}'ss^{-1}N_{s^{-1}}s\subset N\overline{N}.
\end{equation}
If $u=u'u_{s^{-1}}$ and $n=n_{s^{-1}}n'$ are the decompositions of $u$ and $n$ corresponding to the decompositions $N=N'_{s^{-1}}N_{s^{-1}}$ and $N=N_{s^{-1}}N'_{s^{-1}}$, respectively, then recalling that $Z$ normalizes both $N$ and $\overline{N}$ we deduce that the decompositions of the r.h.s. and of the l.h.s. of equation (\ref{tpr1}) corresponding to the decomposition $\overline{G}_0=NL\overline{N}$ take the form
$$
(n_szs^{-1}u'sz^{-1})z(s^{-1}u_{s^{-1}}s)=(nn_s'z's^{-1}{n'}^{-1}s{z'}^{-1})z'(s^{-1}n_{s^{-1}}^{-1}s),
$$
where $n_szs^{-1}u'sz^{-1},nn_s'z's^{-1}{n'}^{-1}s{z'}^{-1}\in N$ and $s^{-1}u_{s^{-1}}s, s^{-1}n_{s^{-1}}^{-1}s\in \overline{N}$, $z,z'\in Z\subset L$. This implies (\ref{eqz}) and establishes the base of induction.

Now let
\begin{equation}\label{decn}
n=n_{i_{M-1}}n_{i_{M-2}}\ldots n_{i_0},u=u_{i_{0}}u_{i_{2}}\ldots u_{i_{M-1}},~n_s={n_s}_{i_{M-1}}{n_s}_{i_{M-2}}\ldots {n_s}_{i_0},~n_s'={n_s'}_{i_0}{n_s'}_{i_2}\ldots {n_s'}_{i_{M-1}}
\end{equation}
be the decompositions of the elements $n,u,n_s$ and $n_s'$ corresponding to decompositions (\ref{d22}) and assume that $n_{i_j}$ and ${n_s'}_{i_j}$ have already been uniquely defined for $j<k-1$. We shall show that using equation (\ref{tpr1}) one can find $n_{i_{k-1}}$ and ${n_s'}_{i_{k-1}}$ in a unique way.

Observe that both the l.h.s. and the r.h.s. of equation (\ref{tpr1}) belong to the dense subset $\overline{G}_k\subset G$ and that
the $G_{i_k}$--component of equation (\ref{tpr1}) with respect to decomposition (\ref{d111}) is reduced to
\begin{equation}\label{tpr2}
{n_s}_{i_{k-1}}({n}_s)_{k-1}zs^{-1}({u})_{k-1}u_{i_{k-1}}s=
n_{i_{k-1}}({n})_{k-1}(n'_s)_{k-1}{n_s'}_{i_{k-1}}zs^{-1}({n})_{k-1}^{-1}n_{i_{k-1}}^{-1}s,
\end{equation}
where $({n}_s)_{k-1}={n_s}_{i_{k-2}}\ldots {n_s}_{i_0}\in G_{i_{k-1}}$, $({n})_{k-1}={n}_{i_{k-2}}\ldots {n}_{i_0}\in G_{i_{k-1}}$, $(n'_s)_{k-1}={n_s'}_{i_0}\ldots {n_s'}_{i_{k-2}}\in G_{i_{k-1}}$, $({u})_{k-1}={u}_{i_{0}}\ldots {u}_{i_{k-2}}\in G_{i_{k-1}}$ and $({n})_{k-1}, ({n}_s)_{k-1}, (n'_s)_{k-1}, ({u})_{k-1}, {n_s}_{i_{k-1}}, u_{i_{k-1}}, z$ are already known.
This follows, similarly to the case $k=0$, from decompositions (\ref{decn}), inclusions (\ref{ngcr}), which also imply that $G_{i_{k-1}}$ normalizes both $N_{i_{k-1}}$ and $\overline{N}_{i_{k-1}}$, and the fact that the subgroups $G_{i_{k}}$ are invariant with respect to the action of $s$ on $G$ by conjugations.

Now recall that $n_{i_{k-1}}\in N_{i_{k-1}}$ and that the subgroup $N_{i_{k-1}}$ is generated by one--dimensional subgroups corresponding to roots $\alpha \in -(\overline{\Delta}_{i_k})_+$.
 Recall also that a root $\alpha\in \overline{\Delta}_{i_k}$ belongs to the set $-(\overline{\Delta}_{i_k})_+$ if and only if $h_{i_k}(\alpha)<0$. Identifying $\h_\mathbb{R}$ and $\h_\mathbb{R}^*$ with the help of the Killing form one can orthogonally project the root $\alpha$ onto the two--dimensional plane (or the line) $\h_{i_k}$ containing $h_{i_k}$. We denote this projection by $\alpha_{i_k}$. Now we conclude that $\alpha$ belongs to the set $-(\overline{\Delta}_{i_k})_+$ if and only if the angle between $\alpha_{i_k}$ and $h_{i_k}$ in the plane $\h_{i_k}$ is obtuse or equal to $\pi$.

Using this observation and noting that $s$ acts in the plane $\h_{i_k}$ by rotation with angle $\theta_{i_k}=\frac{2\pi}{m_{i_k}}\leq \pi,m_{i_k}\in \mathbb{N}$ we deduce that the set $-(\overline{\Delta}_{i_k})_+$ is the disjoint union of the subsets $\overline{\Delta}_{i_k}^l=\{\alpha \in -(\overline{\Delta}_{i_k})_+: s^{-1}\alpha, \ldots, s^{-l}\alpha \in -(\overline{\Delta}_{i_k})_+, s^{-(l+1)}\alpha\in (\overline{\Delta}_{i_k})_+\}$, $l=0, \ldots, D_k+1$.
This can be easily seen from Figure 1 at which the plane $\h_{i_k}$ is shown.
$$
\xy/r10pc/: ="A",-(1,0)="B","A",-(0,0.57)*{h_{i_k}},"A", +(1,0.23)*{\overline{\Delta}_{i_k}^0}, +(-0.22,0.47)*{\overline{\Delta}_{i_k}^1}, +(-0.39,0.25)*{\overline{\Delta}_{i_k}^2}, "B", +(0.1,0.1)*{\overline{\Delta}_{i_k}^{D_k+1}},+(0.06,0.33)*{\overline{\Delta}_{i_k}^{D_k}},"A",
{\xypolygon13"C"{~<{-}~>{}{}}},  "C0";"B"**@{--}, "C0", {\ar+(0,-0.5)},"A", +(0.3,0)="D",+(-0.038,0.14)="E","D";"E" **\crv{(1.3,0.07)}, "A", +(0.4,0.08)*{\theta_{i_k}}
\endxy
$$
\begin{center}
 Fig.1
\end{center}
The vector $h_{i_k}$ is directed downwards at the picture, and the orthogonal projections of elements from $-(\overline{\Delta}_{i_k})_+$ onto $\h_{i_k}$ are contained in the upper half plane. The projections of the roots from the subset $\overline{\Delta}_{i_k}^l$ onto $\h_{i_k}$ are contained in the interior of the sector labeled by $\overline{\Delta}_{i_k}^l$ at Figure 1. All those sectors belong to the upper half plane and have the same central angles equal to ${\theta_{i_k}}$, except for the last sector labeled by $\overline{\Delta}_{i_k}^{D_k+1}$, which can possibly have a smaller angle. The element $s\in W$ acts on the plane $\h_{i_k}$ by counterclockwise rotation with angle ${\theta_{i_k}}$.

Now consider the unipotent subgroups $N_{i_{k-1}}^l$, $l=0, \ldots, D_k+1$, generated by the one--dimensional subgroups corresponding to the roots from the sets $\overline{\Delta}_{i_k}^l$.

Obviously we have decompositions
\begin{equation}\label{decnk}
N_{i_{k-1}}=N_{i_{k-1}}^{0}N_{i_{k-1}}^{1}\ldots N_{i_{k-1}}^{{D_k+1}},~ N_{i_{k-1}}=N_{i_{k-1}}^{{D_k+1}}N_{i_{k-1}}^{{D_k}}\ldots N_{i_{k-1}}^{0}.
\end{equation}
Let
\begin{equation}\label{decnu}
n_{i_{k-1}}=n_{i_{k-1}}^{0}n_{i_{k-1}}^{1}\ldots n_{i_{k-1}}^{{D_k+1}},~ u_{i_{k-1}}=u_{i_{k-1}}^{{D_k+1}}u_{i_{k-1}}^{{D_k}}\ldots u_{i_{k-1}}^{0}
\end{equation}
be the corresponding decomposition of elements $n_{i_{k-1}}$ and $u_{i_{k-1}}$, respectively.

By definition the subgroups $N_{i_{k-1}}^l$ have the following properties:
\begin{eqnarray}\label{sap}
s^{-p}N_{i_{k-1}}^{p-1}s^p\subset \overline{N}_{i_{k-1}},~p=1,\ldots, D_k+2, \\
s^{-1}N_{i_{k-1}}^{p}s\subset {N}_{i_{k-1}}^{p-1},~p=1,\ldots, D_k+1. \label{sap1}
\end{eqnarray}

From Figure 1 it also can be seen that the elements ${n_s}_{i_{k-1}}$ and ${n_s'}_{i_{k-1}}$ have the following decompositions
\begin{equation}\label{decns}
{n_s}_{i_{k-1}}={n_s}_{i_{k-1}}^{{D_k}}{n_s}_{i_{k-1}}^{{D_k+1}},
~{n_s'}_{i_{k-1}}={n_s'}_{i_{k-1}}^{{D_k+1}}{n_s'}_{i_{k-1}}^{{D_k}},
\end{equation}
where ${n_s}_{i_{k-1}}^{{D_k}},{n_s'}_{i_{k-1}}^{{D_k}}\in N_{i_{k-1}}^{{D_k}}$, and ${n_s}_{i_{k-1}}^{{D_k+1}},{n_s'}_{i_{k-1}}^{{D_k+1}}\in N_{i_{k-1}}^{{D_k+1}}$.

We claim that the components $n_{i_{k-1}}^{l}$, $l=0,\ldots, D_k+1$ can be uniquely calculated by induction starting with $n_{i_{k-1}}^{0}$, and at the last two steps of the induction the element ${n_s'}_{i_{k-1}}$ is also uniquely computed.

The induction procedure is based on a very simple observation that can be visualized with the help of Figure 1. Under the action of the operator $s^{-1}$ the orthogonal projections onto $\h_{i_k}$ of the roots from $\overline{\Delta}_{i_k}^l$ are moved into the sector labeled $\overline{\Delta}_{i_k}^{l-1}$ on Figure 1, and each sector labeled by $\overline{\Delta}_{i_k}^l$, $l=0,\ldots, D_k+1$ is moved to the lower half plane by applying a large enough power of the operator $s^{-1}$.

To establish the base of the induction we observe that using decompositions (\ref{decnu}) one can immediately deduce that both the l.h.s. and the r.h.s. of equation (\ref{tpr2}) belong to the dense subset $\overline{G}_{i_k}\subset G_{i_k}$, and the $\overline{N}_{i_{k-1}}$--component of
equation (\ref{tpr2}) with respect to decomposition (\ref{d11}) takes the form
$$
s^{-1}u_{i_{k-1}}^{0}s=s^{-1}(n_{i_{k-1}}^{0})^{-1}s,
$$
and hence $n_{i_{k-1}}^{0}=(u_{i_{k-1}}^{0})^{-1}$.

Now assume that elements $n_{i_{k-1}}^{l}$, $l=0, \ldots, p-1$, $p\leq D_k$ are uniquely defined. We show that the component $n_{i_{k-1}}^{p}$ can be found uniquely from equation (\ref{tpr2}). Indeed, consider the element $m_p=n_{i_{k-1}}^{0}n_{i_{k-1}}^{1}\ldots n_{i_{k-1}}^{{p-1}}$. Multiplying  equation (\ref{tpr2}) by the element $m_p^{-1}$ from the left and by $s^{-1}m_ps$ from the right and conjugating the result by $s^{-p}$ we obtain
\begin{eqnarray}\label{tpr3}
s^{-p}m_p^{-1}{n_s}_{i_{k-1}}({n}_s)_{k-1}zs^{-1}({u})_{k-1}u_{i_{k-1}}m_ps^{p+1}= \qquad \qquad \qquad \qquad \qquad \qquad \qquad \qquad \\
=s^{-p}n_{i_{k-1}}^p\ldots n_{i_{k-1}}^{D_k+1}({n})_{k-1}(n'_s)_{k-1}{n_s'}_{i_{k-1}}zs^ps^{-p-1}({n})_{k-1}^{-1}({n_{i_{k-1}}^{D_k+1}})^{-1}\ldots ({n_{i_{k-1}}^{p}})^{-1}s^{p+1}. \nonumber
\end{eqnarray}
By the induction hypothesis the l.h.s. of (\ref{tpr3}) is uniquely expressed in terms of $n_s,z$ and $u$. Similarly to the case $l=0$ one can show that both the l.h.s. and the r.h.s. of equation (\ref{tpr3}) belong to the dense subset $\overline{G}_{i_k}\subset G_{i_k}$, and by the induction hypothesis and by properties (\ref{sap}) the $\overline{N}_{i_{k-1}}$--component of
equation (\ref{tpr3}) with respect to decomposition (\ref{d11}) takes the form
$$
x_p=s^{-p-1}({n_{i_{k-1}}^{p}})^{-1}s^{p+1},
$$
where $x_p\in s^{-p-1}{N_{i_{k-1}}^{p}}s^{p+1}=s^{-1}{N_{i_{k-1}}^{0}}s$ is expressed in terms of $n_s,z$ and $u$. Therefore $n_{i_{k-1}}^{p}=s^{p+1}x_p^{-1}s^{-p-1}$.

If $p=D_k+1$ then we rewrite equation (\ref{tpr3}) in the following form
\begin{eqnarray}\label{tpr4}
s^{-D_k-1}m_p^{-1}{n_s}_{i_{k-1}}({n}_s)_{k-1}zs^{-1}({u})_{k-1}u_{i_{k-1}}m_ps^{D_k+1}= \qquad \qquad \qquad \qquad \qquad \qquad \qquad \qquad \\
=s^{-D_k-1} n_{i_{k-1}}^{D_k+1}({n})_{k-1}(n'_s)_{k-1}{n_s'}_{i_{k-1}}^{D_k+1}s^{D_k+1}z(s^{-D_k-2}({n})_{k-1}^{-1}s^{D_k+2})v_ks^{-D_k-2}({n_{i_{k-1}}^{D_k+1}})^{-1}s^{D_k+2}, \nonumber
\end{eqnarray}
where $v_k=s^{-D_k-1}s^{-1}({n})_{k-1}sz^{-1}{n_s'}_{i_{k-1}}^{D_k}z
s^{-1}({n})_{k-1}^{-1}ss^{D_k+1}$.
From the definition of the subgroups $N_{i_{k-1}}^{p}$ and condition (\ref{cond}) it follows that the subgroup $G_{i_{k-1}}$ normalizes each of the subgroups $N_{i_{k-1}}^{p}$. Therefore recalling that $Z$ also normalizes each of the subgroups $N_{i_{k-1}}^{p}$ we deduce $s^{-1}({n})_{k-1}sz^{-1}{n_s'}_{i_{k-1}}^{D_k}z
s^{-1}({n})_{k-1}^{-1}s\in {N}_{i_{k-1}}^{D_k}$.
Now similarly to the case $l=0$ one can show that both the l.h.s. and the r.h.s. of equation (\ref{tpr4}) belong to the dense subset $\overline{G}_{i_k}\subset G_{i_k}$, and by the induction hypothesis and by properties (\ref{sap}) the $\overline{N}_{i_{k-1}}$--component of
equation (\ref{tpr3}) with respect to decomposition (\ref{d11}) takes the form
\begin{equation}\label{ll}
x_{D_k+1}=v_ks^{-D_k-2}({n_{i_{k-1}}^{D_k+1}})^{-1}s^{D_k+2},
\end{equation}
where $x_{D_k+1}\in s^{-1}{N_{i_{k-1}}^{0}}s$ is expressed in terms of $n_s,z$ and $u$.

By definition the element ${n_s'}_{i_{k-1}}^{D_k}$ belongs to the unipotent subgroup $N_{s}^{{D_k}}=N_s\bigcap N_{i_{k-1}}^{{D_k}}$ generated by one--dimensional subgroups corresponding to roots from the set $\Delta_s^{{D_k}}=s^{-1}(\Delta_+)\bigcap \overline{\Delta}_{i_k}^{D_k}$. This subgroup is normalized by $G_{i_{k-1}}$ since both $N_s$ and $N_{i_{k-1}}^{{D_k}}$ are normalized by $G_{i_{k-1}}$ by condition (\ref{cond}). Therefore $s^{-1}({n})_{k-1}sz^{-1}{n_s'}_{i_{k-1}}^{D_k}z
s^{-1}({n})_{k-1}^{-1}s\in N_{s}^{{D_k}}$, and $v_k\in s^{-D_k-1}N_{s}^{{D_k}}s^{D_k+1}$.
Recall also that the element ${n}_{i_{k-1}}^{D_k+1}$ belongs to the unipotent subgroup $N_{i_{k-1}}^{{D_k+1}}$ generated by one--dimensional subgroups corresponding to roots from the set $\overline{\Delta}_{i_k}^{D_k+1}=s^{-1}(\Delta_+)\bigcap \overline{\Delta}_{i_k}^{D_k+1}$.

Now consider the closed subset of roots $s^{-1}(\overline{\Delta}_{i_k}^{0})=\Delta_+\bigcap s^{-1}(\overline{\Delta}_{i_k})\subset \Delta_+$ and the corresponding unipotent subgroup $s^{-1}N_{i_{k-1}}^{0}s\subset \overline{N}_{i_{k-1}}$.
Observe that the closed subset of roots $s^{-1}(\overline{\Delta}_{i_k}^{0})$ can be represented as the following disjoint union $s^{-1}(\overline{\Delta}_{i_k}^{0})=s^{-D_k-1}(\Delta_s^{{D_k}})\bigcup s^{-D_k-2}(\overline{\Delta}_{i_k}^{D_k+1})$, and hence we have a unique factorization
\begin{equation}\label{ff}
s^{-1}N_{i_{k-1}}^{0}s=s^{-D_k-1}N_{s}^{{D_k}}s^{D_k+1}s^{-D_k-2}N_{i_{k-1}}^{{D_k+1}}s^{D_k+2}
\end{equation}
Since in formula (\ref{ll}) $v_k\in s^{-D_k-1}N_{s}^{{D_k}}s^{D_k+1}$ and $s^{-D_k-2}({n_{i_{k-1}}^{D_k+1}})^{-1}s^{D_k+2}\in s^{-D_k-2}N_{i_{k-1}}^{{D_k+1}}s^{D_k+2}$ both $v_k$ and $s^{-D_k-2}({n_{i_{k-1}}^{D_k+1}})^{-1}s^{D_k+2}$ can be uniquely defined in terms of the corresponding components of $x_{D_k+1}$, $x_{D_k+1}=x_{D_k+1}'x_{D_k+1}''$,
$x_{D_k+1}'\in s^{-D_k-1}N_{s}^{{D_k}}s^{D_k+1},x_{D_k+1}''\in s^{-D_k-2}N_{i_{k-1}}^{{D_k+1}}s^{D_k+2}$,
$$
v_k=x_{D_k+1}',~s^{-D_k-2}({n_{i_{k-1}}^{D_k+1}})^{-1}s^{D_k+2}=x_{D_k+1}'',
$$
and hence
$$
{n_s'}_{i_{k-1}}^{D_k}=zs^{-1}({n})_{k-1}^{-1}ss^{D_k+1}x_{D_k+1}'s^{-D_k-1}s^{-1}({n})_{k-1}sz^{-1},~~
{n_{i_{k-1}}^{D_k+1}}=s^{D_k+2}x_{D_k+1}''^{-1}s^{-D_k-2}.
$$

Finally, one can uniquely compute the only remaining unknown ${n_s'}_{i_{k-1}}^{D_k+1}$ from equation (\ref{tpr2}) in a similar way.
This proves the induction step and concludes the proof of the theorem.

\end{proof}

\begin{remark}
The particular choice of the normal representative $s\in G$ for a Weyl group element $s\in W$ in the previous proposition is technically convenient but actually not important. Actually any representative of $s\in W$ in the normalizer of $\h$ in $G$ is $H$--conjugate to an element from $Zs$, where $s\in G$ is the normal representative of $s\in W$.
\end{remark}

Now we can prove the main statement of this section.

\begin{proposition}\label{prop1}
Under the conditions of Proposition \ref{prop2} the variety $N_sZs^{-1}\subset G$ is a transversal
slice to the set of conjugacy classes in $G$.
\end{proposition}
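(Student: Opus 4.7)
The plan is to verify transversality infinitesimally: at each $x\in S:=N_sZs^{-1}$ the differential of the conjugation action map $\phi:G\times S\to G$, $(g,y)\mapsto gyg^{-1}$, must be surjective at $(e,x)$. After right-trivialising $T_xG\cong\g$, this differential reads $(v,u)\mapsto(1-{\rm Ad}(x))v+u\cdot x^{-1}$, and a short computation with the parametrisation $(n,z)\mapsto nzs^{-1}$ shows $T_xS\cdot x^{-1}={\rm Ad}(n_sz)(\n_s+\z)$ when $x=n_szs^{-1}$, using that $Z$ normalises $N_s$. Hence transversality at $x$ is equivalent to $(1-{\rm Ad}(x))\g+{\rm Ad}(n_sz)(\n_s+\z)=\g$, and applying the inner automorphism ${\rm Ad}((n_sz)^{-1})$ converts this to the equivalent identity
\[
(1-{\rm Ad}(s^{-1}n_sz))\g+\n_s+\z=\g.
\]

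The next step is to exploit Proposition \ref{prop2}: after the same conjugation it asserts that $(1-{\rm Ad}(s^{-1}n_sz))\n+\n_s+\z$ coincides with the right-trivialised tangent space $W\subset\g$ of $NZs^{-1}N$ at $x$, transported by ${\rm Ad}((n_sz)^{-1})$. So only the further contribution $(1-{\rm Ad}(s^{-1}n_sz))(\overline{\n}\oplus\l)$ needs to span the complement of $W$ in $\g$; a dimension count using $\dim W=\dim N+\dim N_s+\dim Z$ reduces the problem to saying that the induced map $\overline{\n}\oplus\l\to\g/W$ has kernel of dimension exactly $\dim S$, the minimum possibility.

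I would first carry out the surjectivity argument at the base point $x_0=s^{-1}$, where the relevant element $s^{-1}n_sz$ is simply $s^{-1}$. Semisimplicity of $s^{-1}$ gives $\g=\z_\g(s)\oplus(1-{\rm Ad}(s))\g$, so it suffices to prove $\z_\g(s)\subset\n_s+\z+(1-{\rm Ad}(s))\g$. The Levi contribution $\z_\g(s)\cap\l=\z$ is tautological; for each non-trivial $s$-orbit $O=\{\alpha,s\alpha,\ldots,s^{k-1}\alpha\}$ on roots outside $\l$, the $s$-fixed vector $v_O=\sum_j e_{s^j\alpha}$ is congruent modulo $(1-{\rm Ad}(s))\g$ to $k\,e_{\alpha_i}$ for any chosen representative $\alpha_i\in O$. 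The rotation picture of Figure 1 shows that every non-trivial $s$-orbit must cross from $\n$-roots to $\overline{\n}$-roots, so some $\alpha_i\in O$ satisfies $e_{\alpha_i}\in\n_s$; this exhausts $\z_\g(s)$.

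For general $x=n_szs^{-1}$ the remaining task is to control how the inner automorphism ${\rm Ad}(n_sz)$ perturbs the spectral decomposition used at $s^{-1}$. Since ${\rm Ad}(n_sz)$ preserves the parabolic filtration of $\g$ up to shifts in the $\n$-direction, and such shifts are absorbed into $W$, the orbit-by-orbit root-combinatorial argument should lift; the bookkeeping is the main technical obstacle. I expect this to be governed by precisely the layered inductive structure employed in the proof of Proposition \ref{prop2}: the subgroups $G_{i_k}$ together with the angular decomposition $\overline{\Delta}_{i_k}^l$ of Figure 1 already supply the machinery to trace how each root-space contribution is distributed among $(1-{\rm Ad}(x))\n$, $\n_s$, $\z$, and $(1-{\rm Ad}(x))(\overline{\n}\oplus\l)$, so the same induction should close the proof at every $x\in S$.
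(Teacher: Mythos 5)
Your reduction to surjectivity of the differential of $\gamma:G\times N_sZs^{-1}\to G$ at points $(1,n_szs^{-1})$, the identification of the slice's tangent space with $\n_s+\z$, and the use of Proposition \ref{prop2} to account for the $\n$--directions all match the paper's proof. But the proof is not complete: the decisive step is only carried out at the single point $x_0=s^{-1}$, where $s^{-1}n_sz=s^{-1}$ is semisimple and you can invoke $\g=\z_\g(s)\oplus(1-{\rm Ad}(s))\g$. For a general point the operator $1-{\rm Ad}(s^{-1}n_sz)$ has no such spectral decomposition, and your concluding paragraph (``the bookkeeping is the main technical obstacle \dots\ the same induction should close the proof'') is an expectation, not an argument. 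The claim that the shifts produced by ${\rm Ad}(n_sz)$ are ``absorbed into $W$'' is exactly what needs proof: ${\rm Ad}(n_s)$ moves $\overline{\n}$ into $\overline{\n}+\l+\n$, and the $\l$--component does not obviously land in $\n_s+\z+(1-{\rm Ad}(x))\n$. A secondary issue: your orbit vectors $v_O=\sum_j e_{s^j\alpha}$ need not be ${\rm Ad}(s)$--fixed, since the normal representative only guarantees ${\rm Ad}(s)e_{\pm\alpha}=e_{\pm s\alpha}$ for simple $\alpha$; on a general orbit ${\rm Ad}(s)$ permutes the root spaces only up to scalars, and indeed ${\rm Ad}(s)$ may have order $2R$ rather than $R$.

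The paper closes the gap without any root--orbit combinatorics, by using Proposition \ref{prop2} \emph{twice}: once as you do, giving $\n\subset{\rm Im}\,d\gamma_{(1,n_szs^{-1})}$ via the identity $(1-{\rm Ad}(n_szs^{-1}))\n+\n_s+\z={\rm Ad}(n_szs^{-1})\n+\n_s+\z$, and a second time with the roles of $N$ and $\overline N$ interchanged (the map $\overline{\alpha}:\overline N\times s^{-1}\overline N_sZ\to \overline Ns^{-1}\overline N_sZ$, $\overline N_s=sN_ss^{-1}$, noting $s^{-1}\overline N_sZ=N_sZs^{-1}$), which yields $\overline{\n}\subset{\rm Im}\,d\gamma_{(1,n_szs^{-1})}$ at \emph{every} point of the slice. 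With $\n$, $\overline{\n}$ and $\z$ accounted for, only $\h_0^{\perp}$ remains in the orthogonal decomposition $\g=\n+\z+\overline{\n}+\h_0^{\perp}$, and there the $\h_0^{\perp}$--component of $(1-{\rm Ad}(n_szs^{-1}))x_{\h_0^\perp}$ equals $(1-{\rm Ad}(s^{-1}))x_{\h_0^\perp}$ because $\h$ normalizes $\n$ and $L$ centralizes $\h_0^{\perp}$; since ${\rm Ad}(s^{-1})$ has no fixed vectors on $\h_0^{\perp}$, this operator is invertible there. If you want to salvage your approach, you should replace the unfinished perturbation analysis by this symmetric application of Proposition \ref{prop2}; as written, the argument establishes transversality only at $s^{-1}$.
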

\begin{proof} We have to show that the conjugation map
\begin{equation}\label{map}
\gamma:G\times N_sZs^{-1}\rightarrow G
\end{equation}
has the surjective differential.

Note that the set of smooth points of map (\ref{map}) is stable
under the $G$--action by left translations on the first factor of
$G\times N_sZs^{-1}$. Therefore it suffices to show that the
differential of map (\ref{map}) is surjective at points
$(1,n_szs^{-1})$, $n_s\in N_s$, $z\in Z$.

In terms of the right trivialization of the tangent bundle $TG$
and the induced trivialization of $T(N_sZs^{-1})$ the differential
of map (\ref{map}) at points $(1,n_szs^{-1})$ takes the form
\begin{eqnarray}\label{diff8*}
d\gamma_{(1,n_szs^{-1})}:(x,(n,w))\rightarrow (Id-{\rm
Ad}(n_szs^{-1}))x+n+w, \\x\in\g\simeq T_1(G), (n,w)\in
\n_s+\z\simeq T_{n_szs^{-1}}(N_sZs^{-1}). \nonumber
\end{eqnarray}

In order to show that the image of map (\ref{diff8*}) coincides
with $T_{n_szs^{-1}}G\simeq \g$ we shall need a direct orthogonal, with respect to the Killing form, vector space decomposition of the Lie algebra $\g$,
\begin{equation}\label{dec}
\g=\n+\z+\opn+\h_0^\perp,
\end{equation}
where $\opn$ is the nilradical of the parabolic subalgebra $\overline{\p}$ opposite to $\p$, and $\h_0^\perp$ is the orthogonal complement in $\h$ to $\mathbb{C}\h_0$.

We shall use the map $\alpha: N\times N_sZs^{-1}\rightarrow N_sZs^{-1}N$ introduced in Proposition \ref{prop2}. By definition $\alpha$ is the restriction of the map $\gamma$ to the subset $N\times N_sZs^{-1}\subset G\times N_sZs^{-1}$. Observe that in terms of the right trivialization of the tangent bundle $TG$ the differential of the map $\alpha$ at points $(1,n_szs^{-1})\in N\times N_sZs^{-1}$, $n_s\in N_s,z\in Z$ is given by
\begin{eqnarray}\label{diff8}
d\alpha_{(1,n_szs^{-1})}:(x,(n,w))\rightarrow (Id-{\rm
Ad}(n_szs^{-1}))x+n+w, \\x\in\n\simeq T_1(N), (n,w)\in
\n_s+\z\simeq T_{n_szs^{-1}}(N_sZs^{-1}). \nonumber
\end{eqnarray}
Recall that by Proposition \ref{prop2} the conjugation map $\alpha: N\times N_sZs^{-1}\rightarrow N_sZs^{-1}N$ is an isomorphism, and hence its differential is an isomorphism of the corresponding tangent spaces at all points. Using the right trivialization of the tangent bundle $TG$ the tangent space $T_{n_szs^{-1}}(N_sZs^{-1}N)$ can be identified with $\n_s+\z+{\rm Ad}(n_szs^{-1})\n$, $T_{n_szs^{-1}}(N_sZs^{-1}N)=\n_s+\z+{\rm Ad}(n_szs^{-1})\n$. Therefore (\ref{diff8}) and Proposition \ref{prop2} imply that
\begin{equation}\label{inc*}
(Id-{\rm
Ad}(n_szs^{-1}))\n+\n_s+\z={\rm Ad}(n_szs^{-1})\n +\n_s+\z.
\end{equation}
Now observe that by definition the subset $(Id-{\rm Ad}(n_szs^{-1}))\n\subset \g$ is contained in the image of $d\alpha_{(1,n_szs^{-1})}$, and by (\ref{inc*}) the subset ${\rm Ad}(n_szs^{-1})\n\subset \g$ is also contained in the image of $d\alpha_{(1,n_szs^{-1})}$.
Since $\n=(Id-{\rm Ad}(n_szs^{-1}))\n+{\rm Ad}(n_szs^{-1})\n$, we deduce that $\n$ is contained in the image of $d\alpha_{(1,n_szs^{-1})}$, and hence in the image of $d\gamma_{(1,n_szs^{-1})}$,
\begin{equation}\label{inc1}
\n \subset {\rm Im}~d\gamma_{(1,n_szs^{-1})}.
\end{equation}

Interchanging the roles $N$ and $\overline{N}$ in Proposition \ref{prop2} we immediately obtain that the conjugation map $\overline{\alpha}: \overline{N}\times s^{-1}\overline{N}_sZ\rightarrow \overline{N}s^{-1}\overline{N}_sZ$, $\overline{N}_s=sN_ss^{-1}$, is an isomorphism. Observe also that by definition the map $\overline{\alpha}$ is the restriction of $\gamma$ to the subset $\overline{N}\times s^{-1}\overline{N}_sZ=\overline{N}\times N_sZs^{-1}\subset G \times N_sZs^{-1}$. Now using the differential of the map $\overline{\alpha}$ we immediately infer, similarly to inclusion (\ref{inc1}), that
\begin{equation}\label{inc2}
\overline{\n} \subset {\rm Im}~d\gamma_{(1,n_szs^{-1})}.
\end{equation}

Now observe that since $\h$ normalizes $\n$, and $Z$ is a subgroup of $L$, which is, by definition, the centralizer of $\h_0^\perp$, we have for any $x_{\h_0^\perp}\in {\h_0^\perp}$
\begin{equation}\label{s22}
\left((Id-{\rm
Ad}(n_szs^{-1}))x_{\h_0^\perp}\right)_{\h_0^\perp}=(Id-{\rm
Ad}s^{-1})x_{\h_0^\perp}.
\end{equation}
Since by definition the operator ${\rm
Ad}s^{-1}$ has no fixed points in the invariant subspace ${\h_0^\perp}$, from formula (\ref{s22}) it follows that ${\h_0^\perp}$ is contained in the image of $(Id-{\rm Ad}(n_szs^{-1}))$, and hence, by formula (\ref{diff8*}), in the image of $d\gamma_{(1,n_szs^{-1})}$. Recalling also inclusions (\ref{inc1}) and (\ref{inc2}) and taking into account the obvious inclusion $\z\subset {\rm Im}~d\gamma_{(1,n_szs^{-1})}$ and decomposition (\ref{dec}) we deduce that the image of the map $d\gamma_{(1,n_szs^{-1})}$ coincides with $\g \simeq T_{n_szs^{-1}}G$. Therefore the differential of the map $\gamma$ is surjective at all points.

\end{proof}


\section{Subregular slices and simple singularities}\label{sectsub}

\setcounter{equation}{0}

Recall that the definition of transversal slices to adjoint orbits in a complex simple Lie algebra $\g$ and to conjugacy classes in a complex simple algebraic group
$G$ given in \cite{SL} was motivated by the study of simple singularities. Simple singularities appear in algebraic group theory as some singularities of the fibers
of the conjugation quotient map $\delta_G: G \rightarrow H/W$ generated by the inclusion $\mathbb{C}[H]^W\simeq
\mathbb{C}[G]^G\hookrightarrow \mathbb{C}[G]$, where $H$ is a maximal torus of $G$ and $W$ is the Weyl group of the pair $(G, H)$. Some fibers of this map are singular,
 the singularities correspond to irregular elements of $G$, and one can study these singularities by restricting $\delta_G$ to certain transversal slices to
 conjugacy classes in $G$. Simple singularities can be identified with the help of the following proposition proved in \cite{SL}.

\begin{proposition}{\bf (\cite{SL}, Section 6.5)}\label{sld}
Let $S$ be a transversal slice for the conjugation action of $G$ on itself. Assume that $S$  has dimension $r+2$, where $r$ is the rank of $G$.
Then the fibers of the restriction of the adjoint quotient map to $S$, $\delta_G:S \rightarrow H/W$, are normal surfaces with isolated singularities.
A point $x\in S$ is an isolated singularity of such a fiber iff $x$ is subregular in $G$, and $S$ can be regarded as a deformation of this singularity.

Moreover, if $t\in H/W$, and $x\in S$ is a singular point of the fiber $\delta_G^{-1}(t)$, then $x$ is a rational double point of type $_h\Delta_i$ for a suitable
$i\in\{1,\ldots,m\}$, where $\Delta_i$ are the components in the decomposition of the Dynkin diagram $\Delta(t)$ of the centralizer $Z_G(t)$ of $t$ in $G$,
$\Delta(t)=\Delta_1\cup\ldots \cup \Delta_m$. If $\Delta_i$ is of type $A$, $D$ or $E$ then $_h\Delta_i=\Delta_i$; otherwise $_h\Delta_i$ is the homogeneous diagram
of type $A$, $D$ or $E$ associated to $\Delta_i$ by the rule $_hB_n=A_{2n-1}$, $_hC_n=D_{n+1}$, $_hF_4=E_6$, $_hG_2=D_4$.
\end{proposition}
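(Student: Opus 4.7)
The plan is to reduce the statement fibre by fibre to a Lie-algebra question about Slodowy slices through subregular nilpotent elements in a smaller reductive Lie algebra, and then to invoke the Brieskorn--Grothendieck classification of simple singularities. For the dimension, normality and location-of-singularities part, I would exploit that by Steinberg the quotient map $\delta_G\colon G\to H/W$ is flat with equidimensional fibres of codimension $r$. Transversality of $S$ together with miracle flatness applied to the smooth varieties $S$ and $H/W$ then gives that $\delta_G|_S$ is flat with fibres of pure dimension $2$. The smooth locus of each $F_t:=\delta_G^{-1}(t)$ inside $G$ is the open dense regular conjugacy class contained in $F_t$, while the complementary nonregular orbits have codimension $\geq r+2$ in $G$; transversality of $S$ then makes $S\cap F_t$ smooth at every regular point $x\in S$, and forces $S$ to meet any subregular orbit in isolated points. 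A two-dimensional local complete intersection with only isolated singularities automatically satisfies $S_2$ and is regular in codimension $1$, hence normal by Serre's criterion; since $\delta_G|_S$ is flat with normal surface fibres, $S$ itself is the claimed deformation of these singularities over $H/W$.

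To identify the singularity type at a singular point $x\in S\cap F_t$, write the Jordan decomposition $x=\sigma u$ with $\sigma$ semisimple lifting $t$ and $u\in Z_G(\sigma)$ unipotent. Then $x$ is subregular in $G$ iff $u$ is subregular in the reductive group $Z_G(\sigma)^\circ$, whose root system is exactly $\Delta(t)=\Delta_1\sqcup\cdots\sqcup\Delta_m$. I would apply Luna's étale slice theorem (or the explicit algebraic-group version from \cite{SL}) at $\sigma$ to obtain a $Z_G(\sigma)$-equivariant étale neighborhood of $\sigma\in G$ identified with a neighborhood of $0$ in $\mathfrak{z}_G(\sigma)$. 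Under this identification $\delta_G$ pulls back, up to a trivial factor along the centre of $\mathfrak{z}_G(\sigma)$, to the Lie-algebra adjoint quotient of the semisimple part of $\mathfrak{z}_G(\sigma)$, and the germ $(F_t\cap S,x)$ becomes analytically isomorphic to the germ, at the subregular nilpotent $u$, of a transversal slice inside the nilpotent cone of that semisimple Lie algebra. Only the simple summand whose root system is the component $\Delta_i$ containing the subregular component of $u$ contributes nontrivially to the singularity, so the problem reduces to a single simple complex Lie algebra with Dynkin diagram $\Delta_i$.

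If $\Delta_i$ is of type $A$, $D$ or $E$, Brieskorn's theorem identifies the germ produced above with a rational double point of type $\Delta_i$, which already matches the statement since $_h\Delta_i=\Delta_i$. The principal obstacle is the non-simply-laced case $\Delta_i\in\{B_n,C_n,F_4,G_2\}$, where Brieskorn's original result does not directly yield an ADE singularity. Here I would invoke Slodowy's refinement \cite{SL}: in each of these four cases the Slodowy slice through the subregular nilpotent becomes isomorphic to a slice inside the \emph{simply-laced} Lie algebra of type $_h\Delta_i\in\{A_{2n-1},D_{n+1},E_6,D_4\}$, and the original non-simply-laced structure is recovered as the fixed-point data of the associated diagram automorphism ($\mathbb{Z}/2$ for $B,C,F$; $S_3$ for $G_2$). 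Geometrically this automorphism must be realized by the action of the component group $\pi_0(Z_G(t))$ on $F_t$, which permutes the irreducible components of the fibre. The main technical step is to match these two descriptions of the same symmetry case by case, and to verify that the induced action on the simply-laced singularity is exactly the one whose folding produces the correct Dynkin type $\Delta_i$; this is what yields the precise rule $_hB_n=A_{2n-1}$, $_hC_n=D_{n+1}$, $_hF_4=E_6$, $_hG_2=D_4$.
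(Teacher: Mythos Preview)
The paper does not give its own proof of this proposition; it is quoted verbatim from Slodowy's monograph \cite{SL}, Section 6.5, and used as a black box to derive Proposition~\ref{subreg22}. So there is nothing in the paper to compare your argument against.

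That said, your sketch is essentially Slodowy's own argument. The first paragraph is correct: flatness of $\delta_G$ (Steinberg), miracle flatness for $\delta_G|_S$, the observation that transversality of an $(r+2)$-dimensional $S$ forces every point of $S$ to be regular or subregular (since transversality at $x$ requires $\dim\mathcal{O}_x\geq\dim G-\dim S$), and Serre's criterion applied to a two-dimensional local complete intersection with isolated singular locus---these are exactly the ingredients in \cite{SL}. For the identification of the singularity type, your reduction via the Jordan decomposition and an \'etale slice at the semisimple part is also what Slodowy does (his Section~3.15 and Chapter~6), followed by Brieskorn's theorem in the simply-laced case. One small correction to your last paragraph: in Slodowy's treatment of the non--simply-laced types the relevant symmetry is not realised through $\pi_0(Z_G(t))$ acting on the fibre of $\delta_G$, but rather through the component group of the centralizer $Z_G(x)$ of the subregular element (equivalently, the reductive centralizer of the $\mathfrak{sl}_2$-triple), which acts on the transversal slice and induces the folding automorphism of the simply-laced diagram $_h\Delta_i$. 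With that adjustment your outline matches \cite{SL} faithfully.
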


Note that all simple singularities of types $A$, $D$ and $E$ were explicitly constructed in \cite{SL} using certain special transversal slices in complex simple Lie algebras $\g$ and the adjoint quotient map $\delta_{\g}: \g \rightarrow \h/W$ generated by the inclusion $\mathbb{C}[\h]^W\simeq
\mathbb{C}[\g]^G\hookrightarrow \mathbb{C}[\g]$, where $\h$ is a Cartan subalgebra of $\g$ and $W$ is the Weyl group of the pair $(\g, \h)$. Below we give an alternative description of simple singularities in terms of transversal slices in algebraic groups. In our construction we shall use the transversal slices defined in Section \ref{slices1}. The corresponding elements $s\in W$ will be associated to subregular nilpotent elements $e$ in $\g$ via the Kazhdan--Lusztig map introduced in \cite{KL}(recall that $e\in \g$ is subregular if the dimension of its centralizer in $\g$ is equal to ${\rm rank}~\g+2$). The Kazhdan--Lusztig map is a certain mapping from the set of nilpotent adjoint orbits in $\g$ to the set of conjugacy classes in $W$. In this paper we do not need the definition of this map in the general case.
We shall only  describe  the values of this map on the subregular nilpotent adjoint orbits.

\begin{lemma}\label{subreg}
Let $\g$ be a complex simple Lie algebra, $\b$ a Borel subalgebra of $\g$ containing a Cartan subalgebra $\h\subset \b$. Let $W$ be the Weyl group of the pair $(\g, \h)$. Denote by $\Gamma=\{\alpha_1,\ldots,\alpha_r\}$, $r={\rm rank}~\g$ the corresponding system of simple positive roots of $\g$ and by $\Delta$ the root system of $\g$. Fix a system of root vectors $e_{\alpha}\in \g$, $\alpha \in \Delta$. One can choose a representative $e$ in the unique subregular nilpotent adjoint orbit of $\g$ and a representative $s_e$ in the conjugacy class in $W$, which corresponds to $e$ under the Kazhdan--Lusztig map, as follows (below we use the convention of \cite{E} for the numbering of simple roots; for the exceptional lie algebras we give the type of $e$ according to classification \cite{BK} and the type of $s_e$ according to classification \cite{C}):

\begin{itemize}

\item
$A_{r}$, $\g=\mathfrak{sl}_{r+1}$,
$$e=e_{\alpha_1}+\ldots +e_{\alpha_{r-1}},$$
the class of $s_e$ is the Coxeter class in a root subsystem $A_{r-1}\subset A_{r}$, and all such subsystems are $W$--conjugate,
$$
s_e=s_{\alpha_1}\ldots s_{\alpha_{r-1}};$$

\item
$B_r$, $\g=\mathfrak{so}_{2r+1}$,
$$e=e_{\alpha_1}+\ldots +e_{\alpha_{r-2}}+e_{\alpha_{r-1}+\alpha_{r}}+e_{\alpha_{r}},$$
the class of $s_e$ is the Coxeter class in a root subsystem $D_{r}\subset B_{r}$, and all such subsystems are $W$--conjugate,
$$
s_e=s_{\alpha_1}\ldots s_{\alpha_{r-2}}s_{\alpha_{r-1}}s_{\alpha_{r-1}+2\alpha_{r}};$$

\item
$C_r$, $\g=\mathfrak{sp}_{2r}$,
$$e=e_{\alpha_1}+\ldots +e_{\alpha_{r-2}}+e_{2\alpha_{r-1}+\alpha_{r}}+e_{\alpha_{r}},$$
the class of $s_e$ is the Coxeter class in a root subsystem $C_{r-1}+A_1\subset C_{r}$, and all such subsystems are $W$--conjugate,
$$
s_e=s_{\alpha_1}\ldots s_{\alpha_{r-2}}s_{2\alpha_{r-1}+\alpha_{r}}s_{\alpha_{r}};$$

\item
$D_r$, $\g=\mathfrak{so}_{2r}$,
$$e=e_{\alpha_1}+\ldots +e_{\alpha_{r-4}}+e_{\alpha_{r-3}+\alpha_{r-2}}+e_{\alpha_{r-2}+\alpha_{r-1}}+e_{\alpha_{r-1}}+e_{\alpha_{r}},$$
the class of $s_e$ is the class $D(a_1)$,
$$
s_e=s_{\alpha_1}\ldots s_{\alpha_{r-2}}s_{\alpha_{r-1}}s_{\alpha_{r-2}+\alpha_{r-1}+\alpha_{r}};$$

\item
$E_6$, the type of $e$ is $E_6(a_1)$,
$$e=e_{\alpha_1} +e_{\alpha_{2}+\alpha_{3}}+e_{\alpha_{4}}+e_{\alpha_{5}}+e_{\alpha_{3}+\alpha_{6}}+e_{\alpha_{6}},$$
$s_e$ has type $E_6(a_1)$,
$$
s_e=s_{\alpha_1}s_{\alpha_{2}+\alpha_{3}}s_{\alpha_{4}}s_{\alpha_{5}}s_{\alpha_{3}+\alpha_{6}}s_{\alpha_{6}};$$

\item
$E_7$, the type of $e$ is $E_7(a_1)$,
$$e=e_{\alpha_1} + e_{\alpha_2} +e_{\alpha_{3}+\alpha_{4}}+e_{\alpha_{5}}+e_{\alpha_{6}}+e_{\alpha_{7}}+e_{\alpha_{4}+\alpha_{7}},$$
$s_e$ has type $E_7(a_1)$,
$$
s_e=s_{\alpha_1}s_{\alpha_2} s_{\alpha_{3}+\alpha_{4}}s_{\alpha_{5}}s_{\alpha_{6}}s_{\alpha_{7}}s_{\alpha_{4}+\alpha_{7}};$$

\item
$E_8$, the type of $e$ is $E_8(a_1)$,
$$e=e_{\alpha_1} + e_{\alpha_2} + e_{\alpha_3} +e_{\alpha_{4}+\alpha_{5}}+e_{\alpha_{5}+\alpha_{8}}+e_{\alpha_{6}}+e_{\alpha_{7}}+e_{\alpha_{8}},$$
$s_e$ has type $E_8(a_1)$,
$$
s_e=s_{\alpha_1}s_{\alpha_2}s_{\alpha_3} s_{\alpha_{4}+\alpha_{5}}s_{\alpha_{5}+\alpha_{8}}s_{\alpha_{6}}s_{\alpha_{7}}s_{\alpha_{8}};$$

\item
$F_4$, the type of $e$ is $F_4(a_1)$,
$$e=e_{\alpha_1} + e_{\alpha_2}  +e_{\alpha_{2}+2\alpha_{3}}+e_{\alpha_{3}+\alpha_{4}},$$
$s_e$ has type $B_4$,
$$
s_e=s_{\alpha_1}s_{\alpha_2}s_{\alpha_{2}+2\alpha_{3}}s_{\alpha_{3}+\alpha_{4}};$$

\item
$G_2$, the type of $e$ is $G_2(a_1)$,
$$e=e_{2\alpha_1+\alpha_2} + e_{\alpha_2},$$
$s_e$ has type $A_2$,
$$
s_e=s_{3\alpha_{1}+\alpha_{2}}s_{\alpha_{2}}.$$

\end{itemize}

\end{lemma}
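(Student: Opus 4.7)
My plan is to proceed case-by-case through the Cartan--Killing classification, verifying two independent assertions for each simple type: first, that the listed element $e\in \g$ represents the (unique) subregular nilpotent orbit of $\g$; second, that the product of reflections $s_e$ represents the Weyl group conjugacy class assigned to that orbit by the Kazhdan--Lusztig map. The first reduces to computing ${\rm dim}~\g^e={\rm rank}~\g+2$, while the second requires an explicit identification against the tabulated conjugacy classes of $W$.

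For subregularity, I would compute the centralizer directly for each listed $e=\sum_i e_{\beta_i}$. In the classical cases ($A_r, B_r, C_r, D_r$), it suffices to translate $e$ into matrix form in the defining representation, read off the Jordan type from the chain of root vectors, and verify that the resulting partition is the subregular one---respectively $(r,1)$ for $A_r$, $(2r-1,1,1)$ for $B_r$, $(2r-2,2)$ for $C_r$, and $(2r-3,3)$ for $D_r$---at which point the standard centralizer-dimension formulas for $\mathfrak{sl}_n$, $\mathfrak{sp}_{2n}$, and $\mathfrak{so}_n$ give ${\rm rank}~\g+2$. In the five exceptional types I would identify the listed $e$ with the Bala--Carter representative of one of $E_6(a_1)$, $E_7(a_1)$, $E_8(a_1)$, $F_4(a_1)$, or $G_2(a_1)$ using the explicit representatives tabulated in \cite{BK}; subregularity is then part of the Bala--Carter label.

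For the identification of $s_e$, the natural tool is Carter's parametrisation \cite{C} of conjugacy classes in $W$ by admissible diagrams: each class is represented by a product $s_{\gamma_1}\cdots s_{\gamma_k}$, where $\{\gamma_i\}$ is a set of roots spanning a root subsystem of prescribed type and decomposing into two mutually orthogonal families. In each row of the lemma I would verify that the roots appearing in $s_e$ do form such an admissible diagram of the claimed type (Coxeter of $A_{r-1}$ for $A$; Coxeter of $D_r$ for $B$; Coxeter of $C_{r-1}+A_1$ for $C$; the class $D(a_1)$ for $D$; and the named classes in the five exceptional rows). To match this against the Kazhdan--Lusztig image of the subregular orbit, I would use the definition of \cite{KL} directly in the classical cases, where the subregular orbit is induced in a transparent way from an obvious Levi subalgebra, and rely on existing tabulations of the Kazhdan--Lusztig map on nilpotent orbits for the exceptional rows.

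The main obstacle lies in the exceptional rows, especially $E_7$, $E_8$, $F_4$, and $G_2$. Several of the roots appearing in the displayed formulas for $e$ and $s_e$ are non-simple---for instance $\alpha_3+\alpha_4$ in $E_7$, $\alpha_4+\alpha_5$ and $\alpha_5+\alpha_8$ in $E_8$, or $3\alpha_1+\alpha_2$ in $G_2$---so confirming their mutual orthogonality, the admissible-diagram structure, and the type of the subsystem they generate requires explicit computation in Bourbaki coordinates. The $G_2$ row is additionally anomalous in that $s_e$ has type $A_2$ rather than Coxeter $G_2$, reflecting the fact that the Kazhdan--Lusztig image of $G_2(a_1)$ is a non-Coxeter class in the full Weyl group; this point must be verified directly.
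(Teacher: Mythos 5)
Your overall strategy---a case-by-case check through the Cartan--Killing classification, verifying subregularity of $e$ via the centralizer dimension (Jordan type in the classical cases, Bala--Carter labels in the exceptional ones) and identifying the class of $s_e$ via Carter's admissible diagrams---is the same as the paper's, which likewise disposes of the classical cases by the centralizer-dimension formulas of \cite{J} and of the exceptional ones by the tables of \cite{E} and \cite{BK}. The one place where you genuinely diverge, and where your plan is weakest, is the identification of the Kazhdan--Lusztig value itself. You propose to ``use the definition of \cite{KL} directly in the classical cases, where the subregular orbit is induced in a transparent way from an obvious Levi subalgebra.'' The Kazhdan--Lusztig map is not the induction (Lusztig--Spaltenstein) map from Levi subalgebras: it is defined via the splitting type of the centralizer of a generic deformation of $e$ over $\mathbb{C}((\epsilon))$, equivalently via fixed-point varieties on the affine flag manifold, and computing it from the definition even for classical types is a substantial piece of work---it is precisely the content of Spaltenstein's paper \cite{SP}, which the paper cites for the classical rows, with \cite{SP2} covering the exceptional rows. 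The paper also exploits a shortcut you do not mention and which explains why the displayed $e$ and $s_e$ match term by term: the subregular nilpotents in types $E_6$, $E_7$, $E_8$ (and the other listed representatives) are \emph{semiregular}, and for semiregular elements the Kazhdan--Lusztig map takes the simple form $\sum_{\alpha}e_\alpha\mapsto\prod_{\alpha}s_\alpha$ (see \cite{C,CE}). If you replace your ``direct from the definition'' step by an appeal to Spaltenstein's tabulations (or to the semiregular formula), your argument closes; as written, that step would not go through.
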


\begin{proof}
For the classical Lie algebras this lemma follows from the formula for the dimension of the centralizer $\z(e)$ of a nilpotent element $e$(see \cite{J}, Sect. 3.1--3.2). The values of the Kazhdan-Lusztig map for classical Lie algebras are calculated in \cite{SP}.

For the exceptional Lie algebras of types $E_6, E_7$ and $E_8$ the subregular nilpotent elements are also semiregular, and they are described explicitly in \cite{E}.
For the exceptional Lie algebras of types $F_4$ and $G_2$ the subregular nilpotent elements can be easily found using the tables of the dimensions of the
centralizers of nilpotent elements given in \cite{E} and the classification of nilpotent elements \cite{BK}. The values of the Kazhdan--Lusztig map for
exceptional Lie algebras were calculated in \cite{SP2}. The table of values given in \cite{SP2} is not complete. But in all cases one can find the value
of the Kazhdan--Lusztig map on the subregular nilpotent orbit. Note that for the semiregular elements the Kazhdan--Lusztig map has a very simple form,
$\sum_{\alpha\in \Delta}e_{\alpha}\mapsto \prod_{\alpha\in \Delta}s_{\alpha}$ (see \cite{C,CE}).

\end{proof}

Now using elements $s_e$ defined in Lemma \ref{subreg} and Proposition \ref{prop1} we construct transversal slices to conjugacy classes in the algebraic group $G$.
We start by fixing appropriate elements $h_i\in \h_i$, where $\h_i$ are the two--dimensional $s_e$--invariant subspaces arising in decomposition (\ref{hdec}), and $s_e$ acts on $\h_i$ as rotation with angle $\theta_i=\frac{2\pi}{m_i}\leq\pi$, $m_i\in \mathbb{N}$, or as the reflection with respect to the origin (which also can be regarded as rotation with angle $\pi$). We consider all cases listed in Lemma \ref{subreg} separately.

\begin{enumerate}

\item
$A_{r}$

Let $\h_{min}$ be the unique two--dimensional plane on which $s_e$ acts as rotation with the minimal among of all $\theta_i,i=1,\ldots,K$ angle $\theta_{min}=\frac{2\pi}{r}$. Order the terms in (\ref{hdec}) in such a way that $\h_{min}$ is labeled by the maximal index. One checks straightforwardly that $\overline{\Delta}_{min}=\Delta$ and that for any choice $h_{min}\in \h_{min}$ as in Section \ref{slices1} the length $l(s_e)$ of $s_e$ with respect to the corresponding system of simple positive roots is $r+1$, $l(s_e)=r+1$. Note that in this case $\h_0$ is one dimensional, $\h_0=\mathbb{R}\omega_r$, where $\omega_r$ is the fundamental weight corresponding to $\alpha_r$.

\item
$B_r$

Let $\h_{min}$ be the unique two--dimensional plane on which $s_e$ acts as rotation with the minimal among of all $\theta_i,i=1,\ldots,K$ angle $\theta_{min}=\frac{\pi}{r-1}$, and $\h_1=\mathbb{R}\alpha_r^\vee$. The Weyl group element $s_e$ acts on $\h_1$ as reflection with respect to the origin. Order the terms in (\ref{hdec}) in such a way that $\h_{min}$ is labeled by the maximal index. One checks straightforwardly that $\overline{\Delta}_{min}=\Delta\setminus \{\alpha_r,-\alpha_r\}$, $\overline{\Delta}_1=\{\alpha_r,-\alpha_r\}$, $\Delta=\overline{\Delta}_{min}\bigcup \overline{\Delta}_1$ and that for any choice $h_{min}\in \h_{min}$ and $h_1\in \h_1$ as in Section \ref{slices1} the length $l(s_e)$ of $s_e$ with respect to the corresponding system of simple positive roots is $r+2$, $l(s_e)=r+2$. Note that in this case $\h_0=0$.

\item
$C_r$

Let $\h_{min}$ be the unique two--dimensional plane on which $s_e$ acts as rotation with the minimal among of all $\theta_i,i=1,\ldots,K$ angle $\theta_{min}=\frac{\pi}{r-1}$, and $\h_1=\mathbb{R}\alpha_r^\vee$. The Weyl group element $s_e$ acts on $\h_1$ as reflection with respect to the origin. Order the terms in (\ref{hdec}) in such a way that $\h_{min}$ is labeled by the maximal index. One checks straightforwardly that $\overline{\Delta}_{min}=\Delta\setminus \{\alpha_r,-\alpha_r\}$, $\overline{\Delta}_1=\{\alpha_r,-\alpha_r\}$, $\Delta=\overline{\Delta}_{min}\bigcup \overline{\Delta}_1$ and that for any choice $h_{min}\in \h_{min}$ and $h_1\in \h_1$ as in Section \ref{slices1} the length $l(s_e)$ of $s_e$ with respect to the corresponding system of simple positive roots is $r+2$, $l(s_e)=r+2$. Note that in this case $\h_0=0$.

\item
$D_r$

A root subsystem $D_{r-2}+D_2\subset D_r$ is invariant under the action of $s_e$, and $s_e$ acts on $D_{r-2}$ $(D_2)$ as a Coxeter element of $D_{r-1}\supset D_{r-2}$ $(D_3\supset D_2)$, respectively, with respect to the natural inclusions.
Let $\h_{min}$ be the unique two--dimensional plane in the Cartan subalgebra corresponding to the root subsystem $D_{r-2}$ on which $s_e$ acts as rotation with the angle $\theta_{min}=\frac{\pi}{r-2}$, and let $\h_{1}$ be the unique two--dimensional plane in the Cartan subalgebra corresponding to the root subsystem $D_{2}$ on which $s_e$ acts as rotation with the angle $\theta_{1}=\frac{\pi}{2}$. Order the terms in (\ref{hdec}) in such a way that $\h_{min}$ is labeled by the maximal index. One checks straightforwardly that $\overline{\Delta}_{min}=\Delta\setminus D_2$, $\overline{\Delta}_1=D_2$, $\Delta=\overline{\Delta}_{min}\bigcup \overline{\Delta}_1$ and that for any choice $h_{min}\in \h_{min}$ and $h_1\in \h_1$ as in Section \ref{slices1} the length $l(s_e)$ of $s_e$ with respect to the corresponding system of simple positive roots is $r+2$, $l(s_e)=r+2$. Note that in this case $\h_0=0$.

\item
$E_6$, $E_7$, $E_8$, $G_2$

Let $\h_{min}$ be the unique two--dimensional plane on which $s_e$ acts as rotation with the minimal among of all $\theta_i,i=1,\ldots,K$ angle $\theta_{min}$, $\theta_{min}=\frac{2\pi}{9}$ for $E_6$, $\theta_{min}=\frac{\pi}{7}$ for $E_7$, $\theta_{min}=\frac{\pi}{12}$ for $E_8$, $\theta_{min}=\frac{\pi}{3}$ for $G_2$. Order the terms in (\ref{hdec}) in such a way that $\h_{min}$ is labeled by the maximal index. One checks straightforwardly that $\overline{\Delta}_{min}=\Delta$ and that for any choice $h_{min}\in \h_{min}$ as in Section \ref{slices1} the length $l(s_e)$ of $s_e$ with respect to the corresponding system of simple positive roots is $r+2$, $l(s_e)=r+2$. Note that in this case $\h_0=0$.

\item
$F_4$

The multiplicity of the eigenvalue $e^{\frac{\pi i}{3}}$ of $s_e$ with the minimal possible argument is $2$. Therefore there are infinitely many two--dimensional planes in $\h_{\mathbb{R}}$ on which $s_e$ acts as rotation with angle $\theta_{min}=\frac{\pi}{3}$. It turns out that one can choose a two--dimensional plane $\h_{min}\subset \h_{\mathbb{R}}$ on which $s_e$ acts as rotation with angle $\theta_{min}=\frac{\pi}{3}$ and which is not orthogonal to any root. Order the terms in (\ref{hdec}) in such a way that $\h_{min}$ is labeled by the maximal index. Then $\overline{\Delta}_{min}=\Delta$ and for any choice $h_{min}\in \h_{min}$ and $h_1\in \h_1$ as in Section \ref{slices1} the length $l(s_e)$ of $s_e$ with respect to the corresponding system of simple positive roots is $r+2$, $l(s_e)=r+2$. Note that in this case $\h_0=0$.

\end{enumerate}

\begin{remark}\label{br}
Note that in all cases 1-6 considered above the parabolic subalgebra $\p$ associated to $s_e$ with the help of the corresponding element $\bar{h}_0$ is a Borel subalgebra.
\end{remark}

The following proposition follows straightforwardly from Propositions \ref{prop2} and \ref{prop1}.

\begin{proposition}\label{subreg11}
Let $G$ be a complex simple algebraic group with Lie algebra $\g$, $H$ is a maximal torus of $G$, $\h$ the Cartan subalgebra in $\g$ corresponding to $H$.
Let $e$ be the subregular nilpotent element in $\g$ defined in the previous lemma and $s_e$ the element of the Weyl group of the pair $(\g,\h)$ associated to $e$ in Lemma \ref{subreg}. Denote by $s\in G$ the normal representative of $s_e$ in $G$ constructed in Section \ref{slices1}. Denote by $\b$ the Borel subalgebra of $\g$ associated to $s_e$ in Remark \ref{br}.
Let $B$ be the Borel subgroup of $G$ corresponding to the Borel subalgebra $\b$, $N$ the unipotent radical of $B$, $\overline B$ the opposite Borel subgroup in $G$, $\overline{N}$ the unipotent radical of $\overline B$.

Then the variety $N_sZs^{-1}$, where $Z=\{z\in H\mid szs^{-1}=z\}$, $N_s=\{n\in N\mid sns^{-1}\in \overline N\}$, is a transversal slice  to the set of conjugacy classes in $G$ and the conjugation map $N\times N_sZs^{-1}\rightarrow NZs^{-1}N$ is an isomorphism of varieties. The slice $N_sZs^{-1}$ has dimension $r+2$, $r=~{\rm rank}~\g$.
\end{proposition}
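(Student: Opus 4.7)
\medskip

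\noindent\textbf{Proof proposal.} My plan is to reduce the statement to the two main results of Section~\ref{slices1}, Propositions~\ref{prop2} and \ref{prop1}, and then verify the dimension count by a case-by-case bookkeeping using the data tabulated immediately before the statement. The structural part of the proposition requires essentially no new argument: Remark~\ref{br} says that in every one of the cases 1--6 the parabolic subalgebra $\p$ associated to $s_e$ and to the choice of $\overline{h}_0=\sum_k h_{i_k}$ made above is in fact a Borel subalgebra, so $\p=\b$, $\n$ is the unipotent radical of this Borel, and $\l=\h$. Thus $L=H$, the subgroup $Z=\{z\in L\mid szs^{-1}=z\}$ reduces to $\{z\in H\mid szs^{-1}=z\}$, and the subgroup $N_s=\{n\in N\mid sns^{-1}\in\overline{N}\}$ defined in the statement coincides with the one to which Propositions~\ref{prop2} and~\ref{prop1} apply. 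Hence the first claim (that $N_sZs^{-1}$ is a transversal slice to the set of conjugacy classes in $G$) is exactly Proposition~\ref{prop1}, and the second claim (that the conjugation map $N\times N_sZs^{-1}\to NZs^{-1}N$ is an isomorphism of varieties) is exactly Proposition~\ref{prop2}.

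The only remaining point is the dimension count $\dim(N_sZs^{-1})=r+2$. Since the multiplication map $N_s\times Z\to N_sZ$ is an isomorphism onto its image (as $Z\subset H$ normalizes $N$ and meets it trivially) and right translation by $s^{-1}$ is an isomorphism, I would use
\begin{equation*}
\dim(N_sZs^{-1})=\dim N_s+\dim Z=l(s_e)+\dim\h_0,
\end{equation*}
where $l(s_e)$ is the length of $s_e$ with respect to the positive system built from $\bar{h}$ (the identification $\dim N_s=l(s_e)$ was recorded in Section~\ref{slices1}) and $\dim Z=\dim\h_0$ because $Z$ is the centralizer of $s_e$ in the torus $H$ and its Lie algebra $\z=\h^{s_e}$ is, after complexification, the subspace corresponding to $\h_0$.

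Now I would plug in, case by case, the data computed in the list just before the statement: in case~1 ($A_r$) we get $l(s_e)=r+1$ and $\dim\h_0=1$, so the sum is $r+2$; in each of cases~2--6 ($B_r,C_r,D_r,E_6,E_7,E_8,F_4,G_2$) we were told $l(s_e)=r+2$ and $\h_0=0$, giving again $r+2$. This exhausts the list in Lemma~\ref{subreg} and finishes the proof. The only part that requires any care, rather than a direct appeal to earlier propositions, is the $A_r$ case, where the one-dimensional fixed subspace $\h_0=\mathbb{R}\omega_r$ contributes an extra unit of dimension that exactly compensates for the length $l(s_e)=r+1$ being one less than in the other cases; I expect this to be the only book-keeping subtlety worth remarking on explicitly.
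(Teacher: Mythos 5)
Your proposal is correct and follows essentially the same route as the paper, which simply states that the proposition ``follows straightforwardly from Propositions \ref{prop2} and \ref{prop1}'' combined with the case-by-case data ($l(s_e)$ and $\h_0$) listed just before the statement. Your write-up merely makes explicit the reduction via Remark \ref{br} (so that $L=H$ and $Z\subset H$) and the bookkeeping $\dim N_sZs^{-1}=l(s_e)+\dim\h_0=r+2$, including the compensating contribution of $\h_0=\mathbb{R}\omega_r$ in type $A_r$, all of which is consistent with the paper.
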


An immediate corollary of the last proposition and of Proposition \ref{sld} is the following construction of simple singularities in terms of the conjugation quotient map.

\begin{proposition}\label{subreg22}
Let $G$ be a complex simple algebraic group with Lie algebra $\g$, $H$ is a maximal torus of $G$, $\h$ the Cartan subalgebra in $\g$ corresponding to $H$.
Let $e$ be the subregular nilpotent element in $\g$ defined in the Lemma \ref{subreg} and $s_e$ the element of the Weyl group of the pair $(\g,\h)$ associated to $e$ in Lemma \ref{subreg}. Denote by $s\in G$ the representative of $s_e$ in $G$ constructed in Section \ref{slices1}. Let $\b$ be the Borel subalgebra of $\g$ associated to $s_e$ in Remark \ref{br}.
 Denote by $B$ the Borel subgroup of $G$ corresponding to the Borel subalgebra $\b$, and by $N$  the unipotent radical of $B$. Let $\overline B$ be the opposite Borel subgroup in $G$, $\overline{N}$ the unipotent radical of $\overline B$
 $Z=\{z\in H\mid szs^{-1}=z\}$ and $N_s=\{n\in N\mid sns^{-1}\in \overline N\}$.

Then fibers of the restriction of $\delta_G:G\rightarrow G/H$ to $N_sZs^{-1}$ are normal surfaces with isolated singularities. A point $x\in N_sZs^{-1}$ is an isolated singularity of such a fiber iff $x$
is subregular in $G$.

Moreover, if  $t\in H/W$, and $x\in N_sZs^{-1}$ is a singular point of the fiber $\delta_G^{-1}(t)$, then $x$ is a rational double point of type $_h\Delta_i$ for a suitable $i\in\{1,\ldots,m\}$, where $\Delta_i$ are the components in the decomposition of the Dynkin diagram $\Delta(t)$ of the centralizer $Z_G(t)$ of $t$ in $G$, $\Delta(t)=\Delta_1\cup\ldots \cup \Delta_m$. If $\Delta_i$ is of type $A$, $D$ or $E$ then $_h\Delta_i=\Delta_i$; otherwise $_h\Delta_i$ is the homogeneous diagram of type $A$, $D$ or $E$ associated to $\Delta_i$ by the rule $_hB_n=A_{2n-1}$, $_hC_n=D_{n+1}$, $_hF_4=E_6$, $_hG_2=D_4$.
\end{proposition}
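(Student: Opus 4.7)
The plan is to observe that Proposition \ref{subreg22} is essentially a direct combination of Proposition \ref{subreg11} (which supplies the concrete slice) with Proposition \ref{sld} (which is the general statement from \cite{SL} identifying simple singularities on a subregular transversal slice). So rather than a fresh argument, the proof should be a verification that the hypotheses of Proposition \ref{sld} are satisfied by the variety $N_sZs^{-1}$ constructed in the previous proposition.

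First I would note that by Proposition \ref{subreg11}, the subvariety $N_sZs^{-1}\subset G$ is a transversal slice to the set of conjugacy classes in $G$. This part in turn uses Proposition \ref{prop1}, whose key input is the Kostant-type cross-section statement of Proposition \ref{prop2}; the latter applies because $\b$ is a Borel subalgebra (by Remark \ref{br}), so its Levi factor is just $\h$, and $Z=\{z\in H\mid szs^{-1}=z\}$ is the correct centralizer.

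Next I would verify the dimension hypothesis of Proposition \ref{sld}, namely that $\dim N_sZs^{-1}=r+2$. Here $\dim N_s=l(s_e)$ and $\dim Z=\dim \h_0$, where $\h_0$ is the fixed subspace of $s_e$ in $\h_{\mathbb{R}}$. Going through cases 1--6 of the discussion preceding Proposition \ref{subreg11}, one checks that in every case the sum $l(s_e)+\dim \h_0$ equals $r+2$: for $A_r$ one has $l(s_e)=r+1$ and $\dim \h_0=1$; in all other cases $l(s_e)=r+2$ and $\dim \h_0=0$. These are precisely the numerical facts recorded case-by-case in the list preceding Proposition \ref{subreg11}, so no further verification is needed.

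With both hypotheses in hand, Proposition \ref{sld} applies verbatim to $S=N_sZs^{-1}$ and yields the desired conclusion: the fibers of the restriction $\delta_G:N_sZs^{-1}\to H/W$ are normal surfaces with isolated singularities, a point $x\in N_sZs^{-1}$ is an isolated singularity of such a fiber if and only if $x$ is subregular in $G$, and the type of such a singularity is the homogenized diagram ${}_h\Delta_i$ attached to a component $\Delta_i$ of the Dynkin diagram $\Delta(t)$ of $Z_G(t)$, according to the rule $_hB_n=A_{2n-1}$, $_hC_n=D_{n+1}$, $_hF_4=E_6$, $_hG_2=D_4$. I expect no serious obstacle here since the substantive work has already been done in Proposition \ref{prop2}, in the case-by-case dimension computation, and in \cite{SL}; the only point that requires care is the bookkeeping of $l(s_e)$ and $\dim\h_0$ in the exceptional cases, especially $F_4$ where one must invoke the existence of the plane $\h_{min}$ not orthogonal to any root.
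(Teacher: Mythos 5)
Your proposal matches the paper exactly: Proposition \ref{subreg22} is stated there as an immediate corollary of Proposition \ref{subreg11} (which supplies the transversal slice $N_sZs^{-1}$ of dimension $r+2$, via the case-by-case computation of $l(s_e)+\dim\h_0$) combined with Proposition \ref{sld} from \cite{SL}. Your additional bookkeeping of the dimension count is the same verification the paper carries out in the enumerated cases preceding Proposition \ref{subreg11}, so nothing is missing.
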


\end{document}